\theoremstyle{plain}
\newtheorem{thm}{Theorem}
\newtheorem{lemma}[thm]{Lemma}
\newtheorem{claim}[thm]{Claim}
\newtheorem{coroll}[thm]{Corollary}
\theoremstyle{definition}
\newtheorem{defn}[thm]{Definition}
\newtheorem{conject}[thm]{Conjecture}
\theoremstyle{remark}
\newcommand*\cd[1]{\tikz[baseline=(char.base)]{
            \node[shape=circle,draw,inner sep=1pt] (char) {#1};}}
\renewcommand{\to}{\rightarrow}
\newcommand{\duarr}{\,\rotatebox[origin=c]{90}{\ensuremath{\leftrightarrows}}\!\,}
\newcommand{\udarr}{\,\rotatebox[origin=c]{90}{\ensuremath{\rightleftarrows}}\!\,}
\newcommand{\uuarr}{\ensuremath{\upuparrows}}
\newcommand{\cH}{\mathcal{H}}
\newcommand{\cC}{\mathcal{C}}
\newcommand{\st}{^\star}
\newcommand{\tot}[1]{\overset{\!#1}{\!\!\longrightarrow}\!\!}
\newcommand{\backt}[1]{\overset{\!#1}{\!\!\longleftarrow}\!\!}
\newcommand{\xlonghy}{\tikz[baseline=-0.5ex]{\draw (0,0) -- (0.5,0);}}
\newcommand{\unt}[1]{\overset{\!#1}{\,\xlonghy\,}}
\begin{document}

\title{On the number of touching pairs \\ in a set of planar curves}

\author{P\'eter Gy\"orgyi\thanks{Institute for Computer Science and Control, Budapest, Hungary and \mbox{Department} of Operations Research, E\"otv\"os Lor\'and University, Budapest, Hungary. \mbox{Email: \texttt{gyorgyi.peter@sztaki.mta.hu}}}
  \and
  B\'alint Hujter\thanks{MTA-ELTE Egerv\'ary Research Group, Department of Operations Research, E\"otv\"os Lor\'and University, Budapest, Hungary. Supported by the Hungarian Scientic Research Fund grant no.~K109240. 
  Email: \texttt{hujterb@cs.elte.hu}} 
  \and
  S\'andor Kisfaludi-Bak\thanks{Department of Mathematics and Computer Science, TU Eindhoven, The Netherlands. \mbox{Supported} by NWO grant no.~024.002.003. Email: \texttt{s.kisfaludi.bak@tue.nl}}
}


\thanksmarkseries{arabic}

\maketitle

\begin{abstract}
Given a set of planar curves (Jordan arcs), each pair of which meets -- either crosses or touches -- exactly once, we establish an upper bound on the number of touchings. We show that such a curve family has $O(t^2n)$ touchings, where $t$ is the number of faces in the curve arrangement that contains at least one endpoint of one of the curves. Our method relies on finding special subsets of curves called quasi-grids in curve families; this gives some structural insight into curve families with a high number of touchings.
\end{abstract}

\medskip

\textbf{Keywords:}
Combinatorial geometry, Touching curves, Pseudo-segments

\tikzstyle{nb} = [circle, fill=black, minimum size=3pt, inner sep=0pt]
\tikzset{->-/.style={decoration={
  markings,
  mark=at position .5 with {\arrow{>}}},postaction={decorate}}}
\tikzset{-<-/.style={decoration={
  markings,
  mark=at position .5 with {\arrow{<}}},postaction={decorate}}}

\section{Introduction}

The combinatorial examination of incidences in the plane has proven to be a fruitful area of research. The first seminal results are the crossing lemma that  establishes a lower bound on the number of edge crossings in a planar drawing of a graph (Ajtai et al., Leighton \cite{crosslem,Leighton83}), and the theorem by Szemer\'edi and Trotter \cite{szemtrot83}, concerning the number of incidences between lines and points. Soon, the incidences of more general geometric objects (segments, circles, algebraic curves, pseudo-circles, Jordan arcs, etc.) became the center of attention \cite{kedlipa, tamaki98, aronov2002, agarwal04, marcus06, grounded11}.
With the addition of curves, the distinction between touchings and crossings is in order. 

Usually, the curves are either Jordan arcs, i.e., the image of an injective continuous function $\varphi:\left[0,1\right] \to \mathbb{R}^2$, or closed Jordan curves, where $\varphi$ is injective on $\left[0,1\right)$ and $\varphi(0)=\varphi(1)$. Generally, it is supposed that the curves intersect in a finite number of points, and that the curves are in general position: three curves cannot meet at one point, and (in case of Jordan arcs) an endpoint of a curve does not lie on any other curve. (For technical purposes, we will allow curve endpoints to coincide in some proofs.)

Let $P$ be a point where curve $a$ and $b$ meet. Take a circle $\gamma$ with center $P$ and a small enough radius so that it intersects both $a$ and $b$ twice, and the disk determined by $\gamma$ is disjoint  from all the other curves , and contains no other intersections of $a$ and $b$. Label the intersection points of $\gamma$ and the two curves with the name of the curve. We say that $a$ and $b$ \textit{cross} in $P$ if the cyclical permutation of labels around $\gamma$ is $abab$, and $a$ and $b$ \textit{touch} in $P$ if the cyclical permutation of labels is $aabb$.
In a family of curves, let $X$ be the set of crossings and $T$ be the set of touchings. 

The Richter-Thomassen conjecture \cite{rito} states that given a collection of $n$ pairwise intersecting closed Jordan curves in general position in the plane, the number of crossings is at least $(1-o(1))n^2$. A proof of the Richter-Thomassen conjecture has recently been published by Pach et al.~\cite{paruta}. They show that the same result holds for Jordan arcs as well.

It would be preferable to get more accurate bounds for the ratio of touchings and crossings. Fox et al.~constructed a family of x-monotone curves with ratio $|X|/|T|=O(\log n)$ \cite{fox2010}. If we restrict the number of intersections between any two curves, then it is conjectured that the above ratio is much higher. It has been shown that a family of intersecting pseudo-circles (i.e., a set of closed Jordan-curves, any two of which intersect exactly once or twice) has at most $O(n)$ touchings \cite{agarwal04}. We would like to examine a similar statement for Jordan arcs.

A family of Jordan arcs in which any pair of curves intersect at most once (apart from the endpoints) will be called a \textit{family of pseudo-segments}. Our starting point is this conjecture of 
J\'anos Pach \cite{PachPrivateCommunication}:

\begin{conject}\label{conj}
Let $\mathcal{C}$ be a family of pseudo-segments. Suppose that any pair of curves in $\mathcal{C}$ intersect exactly once. Then the number of touchings in $\mathcal{C}$ is $O(n)$.
\end{conject}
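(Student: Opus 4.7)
The plan is to reduce the conjecture to a local structural claim about curves touching a single fixed curve. Define the touching graph $G_T$ on vertex set $\cC$ whose edges are the touching pairs; the goal is to show $|E(G_T)|=O(n)$. I would try to show that $G_T$ has bounded degeneracy: if every non-empty subfamily of $\cC$ contains a curve with touching degree at most some absolute constant $K$, then removing such a curve loses at most $K$ touchings, and recursion yields $|E(G_T)|\le Kn$.

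To exhibit a curve of small touching degree, I would argue by contradiction: suppose every curve in some subfamily is touched by at least $K$ others, for $K$ a sufficiently large constant. Pick any such $C$; then $K$ curves $C_1,\ldots,C_K$ each touch $C$ at distinct points $p_1,\ldots,p_K$ along $C$. At each $p_i$ the curve $C_i$ lies locally on one side of $C$, so by pigeonhole at least $\lceil K/2\rceil$ of them, say $D_1,\ldots,D_m$, sit on the same side; order them by their tangency points along $C$. Each pair $(D_i,D_j)$ must meet exactly once by the hypothesis of the conjecture. I expect the combined geometric constraint --- $m$ curves tangent to $C$ from the same side, ordered along $C$, and meeting every other of them in exactly one point --- to force some pair $D_i,D_j$ to intersect twice (or zero times) once $m$ is large, giving the desired contradiction.

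The technical core is that last claim, and I would attack it by an Erd\H{o}s--Szekeres or Dilworth argument on the cyclic order of the endpoints of the $D_i$'s along a large circle enclosing the arrangement, thinning $\{D_i\}$ to a monotone or nested sub-collection. In the cleanest sub-case, where the thinned $D_i$'s all have both endpoints on the same side of $C$ as their tangencies, each $D_i$ together with the arc of $C$ between its endpoints bounds a near-closed region, and two nested such regions cannot have boundaries meeting in exactly one point without forcing a double intersection of the corresponding $D_i$'s. The main obstacle will be the general endpoint configuration: the endpoints of the $D_i$'s may lie on either side of $C$ and in any face of the partial arrangement, which is presumably the source of the factor $t^2$ in the authors' weaker $O(t^2 n)$ bound. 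A full proof of the conjecture would therefore likely require charging touchings to endpoint positions in a more global way than a local face-counting argument allows, perhaps by amortizing across Jordan-curve closures in the spirit of the $O(n)$ bound for pseudo-circles of~\cite{agarwal04}.
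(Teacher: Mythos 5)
Your proposal does not prove the statement, and in fact the paper does not prove it either: this is Conjecture~\ref{conj}, which remains open. The paper establishes only the weaker bound $O(t_{\cC}^2 n)$ (Theorem~\ref{thm:main}), i.e., it settles the conjecture only when the number of endpoint-containing faces $t_{\cC}$ is bounded. So any complete argument you give here would be a new result, and the bar for the ``technical core'' you defer is correspondingly high.

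The concrete gap is that your core claim is false. You assert that $m$ curves tangent to a fixed curve $C$ from the same side, ordered along $C$ and pairwise meeting exactly once, must for large $m$ contain a pair meeting twice or not at all. The paper's quasi-grids (Definition~\ref{defn:quasi-grid}, Figure~\ref{fig:qgrid}) are exactly a counterexample: arbitrarily many curves $c_1,\dots,c_k$ can touch $g$ on the same side \emph{and} in the same direction, with touching points in order along $g$, while pairwise crossing exactly once. Hence a single curve can have unbounded touching degree, your pigeonhole step produces no contradiction, and the bounded-degeneracy reduction on the global touching graph collapses at the first step. (Your ``nested regions'' heuristic corresponds to Lemma~\ref{lem:S1}, whose conclusion is precisely that the two curves \emph{do} cross once, in a constrained position --- not that they are forced to cross twice.) The paper's route is different: it restricts to two families with \emph{fixed endpoints}, shows each decomposes into a bounded number of quasi-grids with respect to any touched curve (Lemma~\ref{lem:48quasigrids}), and then derives a contradiction not from one curve touching many others, but from the impossibility of \emph{two} curves each touching all members of a 5-element quasi-grid (Lemmas~\ref{lem:touch2lem} and~\ref{lem:quasigrid_has_unique_g}); the degeneracy argument is run only inside the bipartite touching graph of such a pair of families. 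Your closing diagnosis --- that arbitrary endpoint configurations are the obstruction and the source of the $t^2$ factor --- is accurate, but it is a description of why the conjecture is open, not a step toward closing it.
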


A family of pseudo-segments is \emph{intersecting} if every pair of curves intersects (i.e., either touches or crosses) exactly once outside their endpoints. 

Two important special cases of the above are the cases of \emph{grounded} and \emph{double-grounded} curves. (The definitions are taken verbatim from \cite{grounded11}.)
A collection $\cC$ of curves is \emph{grounded} if there is a closed Jordan curve $g$  called \emph{ground} such that each curve in $\cC$ has one endpoint on $g$ and the rest of the curve is in the exterior of $g$. The collection is \emph{double grounded} if there are disjoint closed Jordan curves $g_1$ and $g_2$ such that each curve $c \in \cC$ has one endpoint on $g_1$ and the other endpoint on $g_2$, and the rest of $c$ is disjoint from both $g_1$ and $g_2$. 

According to our knowledge the best upper bound is $O(n\log n)$ for the
number of touchings in a double-grounded $x$-monotone family of pseudo-segments
\cite{pach91} and we do not know any (non-trivial) result for the grounded case.

\subsection{Our contribution}
{
Let $\cC$ be an intersecting family of pseudo-segments. There is a planar graph drawing that corresponds to this family: the vertices are the crossings and touchings, and the edges are the sections of the curves between neighboring intersections. (Notice that the sections between curve endpoints and the neighboring intersections are not represented in this graph.) Consider the faces of this planar graph drawing. Let $t_{\cC}$ be the number of faces that contain an endpoint of at least one curve in $\cC$. Our main theorem can be stated as follows:

\begin{thm}\label{thm:main}
Let $\cC$ be an $n$-element intersecting family of pseudo-segments on the Euclidean plane. Then the number of touchings between the curves is $f(n) = O(t_{\cC}^2n)$.
\end{thm}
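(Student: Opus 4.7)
My plan is to bound the number of touchings incident to each individual curve $c \in \mathcal{C}$ by $O(t_\mathcal{C}^2)$; summing over the $n$ curves and dividing by $2$ yields $|T| = O(t_\mathcal{C}^2 n)$, as required.

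Fix a curve $c$ and let $T_c$ be the touching points on $c$, linearly ordered along $c$. Each $P \in T_c$ comes with a partner curve $c_P$, which is tangent to $c$ at $P$ and lies entirely on one side of $c$ (since $c_P$ and $c$ intersect exactly once, at $P$). By pigeonholing we may assume all of $T_c$ is on one side of $c$, so every $c_P$ together with both its endpoints sit in a common half of the local neighborhood of $c$. I plan to define a \emph{quasi-grid} (of size $k$) as a subcollection $c_{P_1}, \ldots, c_{P_k}$ for which the order of the touchings $P_i$ along $c$ matches a natural secondary order on the partner curves — for instance, the order in which the curves leave $c$ (or place an endpoint) in a small neighborhood of their touching points. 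With this definition in hand, an Erd\H{o}s--Szekeres / Dilworth argument applied to the two orderings on $T_c$ should produce a quasi-grid of size $\Omega(\sqrt{|T_c|})$ inside any large family of touchings on $c$.

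The second, more geometric step is to show that a quasi-grid of size $k$ forces $\Omega(k)$ faces of the arrangement to contain at least one curve endpoint. Consecutive touchings $P_i, P_{i+1}$ on $c$, together with the segments of $c_{P_i}$ and $c_{P_{i+1}}$ that emanate on the matching sides, enclose a ``lens'' region in the arrangement. A fourth curve $d \in \mathcal{C}$ must meet each of $c, c_{P_i}, c_{P_{i+1}}$ exactly once; hence if $d$ enters the lens across one of its three boundary arcs it must leave across another, creating a second intersection with one of the boundary curves, which is forbidden. Consequently every curve other than the three lens-defining ones either avoids the lens entirely or has an endpoint inside it. A planarity/Euler-type count then shows that a linear fraction of the $k$ lenses must actually contain an endpoint, yielding $t_\mathcal{C} \geq \Omega(k)$, equivalently $|T_c| \leq O(k^2) = O(t_\mathcal{C}^2)$.

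\textbf{Main obstacle.} The decisive challenge is to pin down the right notion of quasi-grid: it must be weak enough to be extractable via a Dilworth/Erd\H{o}s--Szekeres argument, yet strong enough to produce the lens structure used in the endpoint count. A further subtlety is that lenses defined locally near $c$ need not coincide with faces of the global arrangement, because a quasi-grid curve $c_{P_i}$ may have its own endpoint inside a neighboring lens, and several endpoint-containing lenses could in principle merge into a single face of the full arrangement; ensuring that the $\Omega(k)$ lenses still contribute distinctly to $t_\mathcal{C}$ requires a careful global argument controlling how faces of the sub-arrangement split or merge in the presence of the remaining curves of $\mathcal{C}$.
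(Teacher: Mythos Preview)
The proposal has a fatal flaw at the very first step: the per-curve bound $|T_c| = O(t_{\cC}^2)$ is simply false. Take the canonical quasi-grid configuration (exactly the picture in Figure~\ref{fig:qgrid}): one curve $g$ from $X$ to $Y$, and $n-1$ curves from $A$ to $B$, each touching $g$ on the same side and pairwise crossing one another. This is an intersecting family of pseudo-segments with $t_{\cC}\le 4$ (the four endpoints $X,Y,A,B$ lie in at most four faces), yet the single curve $g$ carries $|T_g|=n-1$ touchings. So no argument that bounds touchings one curve at a time can succeed.

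The breakdown occurs in your second step. A quasi-grid of size $k$ does \emph{not} force $\Omega(k)$ endpoint-containing faces; in the example above the quasi-grid has size $n-1$ while only a bounded number of faces receive endpoints. Your lens argument fails because a fourth curve $d$ can enter a triangular lens through one boundary arc and exit through a different one, using one intersection with each of two bounding curves and leaving its mandatory intersection with the third curve free to occur elsewhere. Nothing prevents many curves from threading through consecutive lenses without depositing endpoints. You correctly flagged this merging-of-faces issue as the ``main obstacle'', but it is not a subtlety to be patched; it is a counterexample to the whole scheme.

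For comparison, the paper does not bound touchings per curve. It first contracts each endpoint-containing face to a single point, obtaining at most $t=t_{\cC}$ distinct endpoints, and partitions $\cC$ into $s\le\binom{t+1}{2}$ classes by endpoint pair. The core work (Lemma~\ref{lem:main}) shows that between any two fixed-endpoint classes the number of touchings is linear in their combined size; quasi-grids enter there, but the decisive structural fact is that no $5$-element quasi-grid can be a quasi-grid with respect to two different curves simultaneously (Lemma~\ref{lem:quasigrid_has_unique_g}), which bounds the minimum degree in an auxiliary bipartite touching graph. Summing the linear bounds over all $O(t^2)$ pairs of classes yields $O(t^2 n)$.
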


If $t_{\cC}$ is constant, this theorem settles Conjecture~\ref{conj}. 
Note that this includes the case when $\cC$ is a double-grounded intersecting family of pseudo-segments:
\begin{coroll}\label{cor:doublegrounded}
Let $\cC$ be an $n$-element double-grounded intersecting family of pseudo-segments. Then the number of touchings between the curves is $O(n)$.
\end{coroll}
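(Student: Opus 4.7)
The plan is to derive Corollary~\ref{cor:doublegrounded} directly from Theorem~\ref{thm:main} by showing that for a double-grounded intersecting family the parameter $t_{\cC}$ is bounded by an absolute constant (in fact by $2$), so that the bound $O(t_{\cC}^2 n)$ collapses to $O(n)$. All the substantive content of the corollary is therefore a short topological observation: all curve endpoints must live in at most two faces of the planar drawing associated to $\cC$.

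The key step is to argue that each ground curve $g_i$ lies entirely inside a single face of the arrangement. First, I would check that $g_i$ avoids every vertex of the drawing: vertices are crossings or touchings between pairs of curves of $\cC$, which by the general position assumption occur strictly in the relative interiors of the two curves involved, and these interiors are disjoint from $g_1 \cup g_2$ by the double-grounded hypothesis. Next, I would check that $g_i$ avoids every edge: edges are portions of curves strictly between two consecutive intersection points, so they lie in the interior of some curve and are therefore disjoint from $g_1 \cup g_2$ as well, since the tails joining endpoints to first intersections are deliberately excluded from the edge set. Because $g_i$ is connected and disjoint from the one-dimensional embedded arrangement graph, it must sit in a single face $F_i$ of the drawing.

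Finally, since every endpoint of every curve in $\cC$ lies on $g_1 \cup g_2 \subseteq F_1 \cup F_2$, there are at most two faces of the drawing that contain endpoints, so $t_{\cC} \leq 2$. Plugging this into Theorem~\ref{thm:main} yields the bound $O(n)$ on the number of touchings.

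I do not expect any real obstacle in this argument; the only subtlety is confirming that the general position conventions already in force exclude the degenerate possibility of an endpoint coinciding with an interior intersection, which would place an endpoint on the boundary of several faces rather than inside a unique one. Once that is noted, the bound $t_{\cC} \leq 2$ is immediate and the corollary follows.
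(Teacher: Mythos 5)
Your proof is correct and matches the paper's (implicit) reasoning: the paper presents the corollary as an immediate consequence of Theorem~\ref{thm:main} via the observation that all endpoints of a double-grounded family lie on $g_1\cup g_2$, each of which sits inside a single face of the arrangement, giving $t_{\cC}\le 2$. Your write-up simply makes explicit the disjointness checks that the paper leaves to the reader.
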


A careful look at the proof of the main theorem yields the following result for grounded intersecting families of pseudo-segments:

\begin{thm}\label{thm:grounded}
Let $\cC$ be an $n$-element grounded intersecting family of pseudo-segments. Then the number of touchings between the curves is $O(t_{\cC}n)$.
\end{thm}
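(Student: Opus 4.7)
The plan is to adapt the proof of Theorem~\ref{thm:main}, using the extra structure of a grounded family: every curve has an endpoint on the common closed Jordan curve $g$. I expect the $t_{\cC}^2$ factor in the main theorem to arise from classifying each curve by the ordered pair of faces of the arrangement containing its two endpoints. In the grounded case one of these faces is automatically the face $F_g$ bordering $g$, so only $t_{\cC}$ classes remain and one factor of $t_{\cC}$ is saved.

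Concretely, I would first partition $\cC$ into at most $t_{\cC}$ classes $\cC_1,\dots,\cC_{t_{\cC}}$ according to the face of the arrangement that contains the free endpoint of each curve. For a fixed pair $(\cC_i,\cC_j)$, the sub-family $\cC_i\cup\cC_j$ can be viewed as a double-grounded intersecting family: one ground is $g$ itself, and the other is a closed Jordan curve enclosing the common face of the free endpoints. Corollary~\ref{cor:doublegrounded} then bounds the number of touchings between $\cC_i$ and $\cC_j$ by $O(|\cC_i|+|\cC_j|)$. Summing over all class pairs, the total number of touchings is
\[
\sum_{i,j} O\bigl(|\cC_i|+|\cC_j|\bigr) \;=\; O(t_{\cC}\, n),
\]
as required. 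The grounded endpoints, all lying on $g$, also come with a natural cyclic order, which should help whenever the argument in Theorem~\ref{thm:main} relies on an ordering of endpoints along the boundary of a face.

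The main obstacle is to ensure that the reduction to the double-grounded case is genuinely valid in each class pair: one must check that the boundary of $F_g$ and of each free-endpoint face can actually play the role of a ground, i.e., can be replaced by a closed Jordan curve disjoint from the rest of the arrangement without disturbing the touching/crossing pattern, and that the argument behind Corollary~\ref{cor:doublegrounded} (rather than the weaker general bound of Theorem~\ref{thm:main}) remains applicable after this local modification. A secondary technicality is the possibility of coinciding grounded endpoints on $g$, but since any two curves meet exactly once and $g$ induces a linear order on these endpoints, such configurations can be handled by the same coincident-endpoint conventions used in the proof of Theorem~\ref{thm:main}. I expect no additional machinery beyond that theorem and its corollary to be necessary.
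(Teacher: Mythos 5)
Your central idea --- that in the grounded case all the grounded endpoints lie in a single face of the arrangement (the face containing the ground $g$, which is disjoint from every trimmed curve), so the number of endpoint classes drops from $O(t_{\cC}^2)$ to $O(t_{\cC})$ --- is exactly the paper's argument: the paper simply observes that the quantity $s$ from the proof of Theorem~\ref{thm:main} equals $t_{\cC}+1$ for grounded families, and the same summation then yields $O(t_{\cC}n)$. Your final count $\sum_{i,j}O\bigl(|\cC_i|+|\cC_j|\bigr)=O(t_{\cC}n)$ is correct, since each $|\cC_i|$ appears in only $O(t_{\cC})$ terms.

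The gap is in how you justify the bound $O(|\cC_i|+|\cC_j|)$ for a pair of classes with $i\neq j$. You propose to view $\cC_i\cup\cC_j$ as double-grounded, with second ground ``a closed Jordan curve enclosing the common face of the free endpoints''; but for $i\neq j$ the free endpoints of $\cC_i$ and of $\cC_j$ lie in two \emph{different} faces $F_i\neq F_j$ of the arrangement, and any closed Jordan curve meeting both faces must cross the arrangement, so no common second ground exists and Corollary~\ref{cor:doublegrounded} does not apply. (You flag this as ``the main obstacle'' but do not resolve it.) The fix is to drop the double-grounded detour entirely: after routing all grounded endpoints to a single point $P_0$ (which is possible precisely because they share a face) and the free endpoints of class $\cC_i$ to a point $P_i\in F_i$, the pair $(\cC_i,\cC_j)$ consists of curves from $\cC(P_0,P_i)$ and $\cC(P_0,P_j)$, and Lemma~\ref{lem:main} --- which explicitly allows its four endpoints $A,B,C,D$ to coincide arbitrarily --- gives the $O(|\cC_i|+|\cC_j|)$ bound directly. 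With that substitution your argument becomes the paper's proof.
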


The intuition behind our approach can be described as follows. Curves starting in the same face of an arrangement can be thought of as curves having the same endpoints. A curve going from point $A$ to $B$ that touches some other curve $g$ can do that touching only in a constant number of ways, depending on which side of $g$ is touched and in which direction. We observe that a collection of curves going from $A$ to $B$ must therefore contain a subcollection that touch $g$ the same way, and these curves must have a very special grid-like structure, which we call \emph{quasi-grids}. 

It turns out that quasi-grids always emerge when we take two grid families of pseudo-segments, one containing curves from $A$ to $B$, the other containing curves from $C$ to $D$. Note that a curve touching all curves in a large quasi-grid has to lie outside the ``grid cells'', since it cannot cross the quasi-grid curves, and within a ``grid cell'' it could only reach at most four curves. If we find two curves touching the same large quasi-grid, then (intuitively) those two curves would have many intersections -- this is not possible in an intersecting family of pseudo-segments. We show that the number of touchings between a pair of fixed endpoint curve families is linear in the size of these families. We then use this observation to get the bound on the total number of touchings.

\section{Proof of the main theorem}

The rigorous proof of our main theorem is based upon a key lemma. Its proof anticipates and uses several technical lemmas which are detailed in Sections \mbox{\ref{sec:quasigrid} and \ref{sec:mainprf}}.

Before stating the key lemma, we introduce some notations. The notation $g \asymp h$ means that curves $g$ and $h$ touch each other. If $A$ and $B$ are (not necessarily distinct) points on the plane, then $\cC(A,B)$ denotes the set of directed curves going from $A$ to $B$. Note that here we consider curves as directed ones for technical reasons (for example, we can refer to the sides of a directed curve as \emph{left} and \emph{right}).

\begin{lemma}\label{lem:main}
Let $A,B,C,D$ be not necessarily distinct points on the plane, and $\cC_1$ and $\cC_2$ be finite disjoint curve families from $\cC(A,B)$ and $\cC(C,D)$, respectively. If $\cC_1 \cup \cC_2$ is an intersecting family of pseudo-segments, then
\begin{enumerate}
\item the number of $c_1 \asymp c_2$ touchings where $c_1 \in \cC_1$ and $c_2 \in \cC_2$ is $O(|\cC_1 \cup \cC_2|)$;
\item the number of touchings between curves of $\cC_i$ is $O(|\cC_i|)\;(i=1,2)$.
\end{enumerate}
\end{lemma}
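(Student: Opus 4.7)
My plan is to localize the analysis to touchings of a single \emph{type}. At any touching $c_1 \asymp c_2$ with $c_1 \in \cC_1$ directed from $A$ to $B$ and $c_2 \in \cC_2$ directed from $C$ to $D$, there are only $O(1)$ local configurations: which side of $c_1$ the touching lies on (left or right), which side of $c_2$ it lies on, and whether the directions of the two curves locally agree or oppose. This gives a constant number of touching \emph{types}, so to prove part~(1) it suffices to show that for each fixed type $\tau$ the number of type-$\tau$ touchings is $O(|\cC_1|+|\cC_2|)$.

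The core of the argument is to show that the bipartite type-$\tau$ touching graph has bounded degree on at least one side. Suppose some $c_2 \in \cC_2$ is type-$\tau$ touched by curves $c_1^{(1)},\dots,c_1^{(k)} \in \cC_1$. All the $c_1^{(j)}$ share the endpoints $A,B$ and approach $c_2$ with the same local orientation; ordering the touching points along $c_2$ and combining this with the pseudo-segment condition (every pair of curves meets exactly once) should force the family $\{c_1^{(j)}\}$ into the rigid \emph{quasi-grid} configuration introduced in Section~\ref{sec:quasigrid}. Concretely, once $k$ exceeds some absolute constant $k_0$, I would invoke the structural lemma from Section~\ref{sec:quasigrid} to extract a quasi-grid on a large subfamily of the $c_1^{(j)}$.

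Next, I would apply the topological obstruction sketched in the introduction: no two distinct curves with a common pair of endpoints can touch every member of a sufficiently large quasi-grid. A curve touching all quasi-grid curves must lie in the unique unbounded ``outside'' region (within any grid cell it could reach only a bounded number of sides, and it cannot cross any grid curve more than once), and two such curves sharing the endpoints $(C,D)$ would then be forced to cross each other more than once, contradicting the pseudo-segment hypothesis. Since $c_2$ already touches the quasi-grid, the existence of any second $c_2'\in\cC_2$ with type-$\tau$ touchings on sufficiently many $c_1^{(j)}$ would be a contradiction. A double-counting/pigeonhole argument across all $c_2 \in \cC_2$ then limits the total number of type-$\tau$ touchings to $O(k_0(|\cC_1|+|\cC_2|))$; summing over the $O(1)$ types yields part~(1).

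For part~(2), the strategy is identical but with both endpoints shared from the outset: same-type touchings within $\cC_i$ produce a quasi-grid internal to $\cC_i$, and the pseudo-segment condition again precludes two curves from touching a large such quasi-grid. The main obstacle is squarely the content of Section~\ref{sec:quasigrid}: one must (i) define a quasi-grid so that its ``outside vs.~cells'' topology is usable, (ii) prove that many same-type, same-endpoint touchings unavoidably yield one, and (iii) verify the obstruction that two pseudo-segments cannot both touch a large quasi-grid. A careful case analysis over the four touching types, together with planar-topological arguments distinguishing the orientations, will be the most delicate part; once this is done, the combinatorial packaging into the $O(|\cC_1\cup\cC_2|)$ bound is routine.
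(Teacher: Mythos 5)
Your toolkit matches the paper's at a high level: the decomposition into $O(1)$ touching types (the $g$-touch-equivalence classes behind Lemma~\ref{lem:48quasigrids}), the extraction of quasi-grids from large same-type neighborhoods, and the obstruction that a large quasi-grid cannot be shared by two curves (Lemma~\ref{lem:quasigrid_has_unique_g}). The gap is in the final combinatorial step. What your obstruction actually delivers is that no two curves of $\cC_2$ can both be ``strongly related'' to $M$ common neighbours in $\cC_1$ for some absolute constant $M$; in other words, the bipartite touching graph is $K_{2,M}$-free. By the K\H{o}v\'ari--S\'os--Tur\'an bound this permits $\Theta(n^{3/2})$ edges, so ``no second $c_2'$ touches sufficiently many of the $c_1^{(j)}$'' plus double counting cannot yield $O(|\cC_1|+|\cC_2|)$. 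Note also that a single high-degree $c_2$ produces no contradiction by itself --- it merely produces a quasi-grid --- so there is nothing for your pigeonhole to collide with.

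The missing idea is the paper's Lemma~\ref{lem:touch2lem}: if $h$ is an inner curve of a three-element quasi-grid with respect to $g$, then any curve sharing $g$'s endpoints that touches $h$ must \emph{also} touch one of the three specific curves $g,h_1,h_3$. This is what upgrades ``many curves touch $h$'' to ``many curves touch both $h$ and one common second curve $h'$'', and only then does Lemma~\ref{lem:quasigrid_has_unique_g} bite: among those one finds five curves forming a quasi-grid with respect to both $h$ and $h'$, a contradiction. To guarantee that the inner curve $h$ (which you do not get to choose) itself has many touching neighbours, the paper first passes to a subfamily of minimum touching-degree at least $K$ by iteratively deleting low-degree vertices --- a reduction your sketch also omits. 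With these two ingredients added, your type decomposition and quasi-grid extraction are exactly the right setup; without them the argument does not close.
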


\begin{proof}
We only consider the first claim, the second can be proven with the same tools. Suppose for contradiction that there are $\omega(|\cC_1 \cup \cC_2|)$ instances of $c_1 \asymp c_2$ touchings. 

Let $K$ be a large constant. Without loss of generality, we can suppose that each curve of $\cC_i$ touches at least $K$ curves of $\cC_j$. To see this, consider first the bipartite graph $G$ with vertex set $\cC_1 \cup \cC_2$, where the edges correspond to the $c_1 \asymp c_2$ touchings ($c_1 \in \cC_1$ and $c_2 \in \cC_2$). If $G$ has vertices of degree less than $K$, then delete those vertices and the incident edges. Iterate this procedure until the minimum degree is at least $K$ or the graph is empty. If $G$ had at least $K|\cC_1\cup\cC_2|$ edges, then this procedure cannot result in an empty graph.

Let $g\in \cC_1$ be an arbitrary curve. By Lemma \ref{lem:48quasigrids}, there is a \emph{quasi-grid} with respect to $g$ formed by at least $K/48 > 3$ curves. A quasi-grid is depicted in Figure~\ref{fig:qgrid}, the precise definition is given in Definition~\ref{defn:quasi-grid}.

Consider an ``inner'' curve $h$ in this quasi-grid. By Lemma~\ref{lem:touch2lem}, if a curve touches $h$, then it must also touch $g$ or a neighboring curve of $h$ in the quasi-grid. 
By our starting assumption, at least $K$ curves touch $h$. Then by Lemma~\ref{lem:48quasigrids}, at least $K/48$ of the curves touching $h$ must also touch another specific curve $h'$, and at least $K/(48)^2$ of these form a quasi-grid $\mathcal{Q}$ with respect to both $h$ and $h'$.

Therefore, by choosing $K \geq 4 \cdot 48^2 + 1$, the quasi-grid $\mathcal{Q}$ can be forced to contain at least five curves. This is a contradiction by Lemma~\ref{lem:quasigrid_has_unique_g}. 
\end{proof}

Next we show how Lemma~\ref{lem:main} implies Theorem~\ref{thm:main}. Let $t=t_{\cC}$.

\begin{proof}[Proof of Theorem~\ref{thm:main}]

Consider the planar graph drawing that corresponds to $\cC$. Let the faces of this planar graph drawing that contain an endpoint of at least one curve in $\cC$ be: $F_1,F_2,\dots,F_t$.

For $i=1,2,\dots,t$, let $P_i$ be an arbitrary point in the interior of $F_i$ not incident to any curve in $\cC$. Each curve endpoint inside $F_i$ can be connected to $P_i$ without adding any  intersections between the curves of $\cC$ with the exception of $P_i$.
Let $\cC'$ be the family of pseudo-segments obtained from $\cC$ by this procedure.

Partition $\cC'$ to disjoint subsets $\cC_1,\cC_2,\dots,\cC_s$ so that two curves are in the same subset if and only if their endpoints are the same. Note that $s \leq \binom{t+1}{2}$. Fix the orientation of each curve in $\cC'$ from $P_i$ to $P_j$ if $i < j$, and arbitrarily if $i=j$. 

Let $f_k$ denote the number of touchings inside $\cC_k$ and $f_{k,l}$ denote the number of touchings between $\cC_k$ and $\cC_l$. Then the total number of touchings in $\cC'$ is
\begin{align*}
  f(n) &= \sum_k f_k + \sum_{k < l} f_{k,l}
    = \sum_k O(|\cC_k|) + \sum_{k < l} O(|\cC_k| + |\cC_l|) \\
    &= O(n) + \sum_k (s-1) O(|\cC_k|) = O \left( s n \right) = O \left( t^2 n \right),
\end{align*}
where the second equation follows from Lemma \ref{lem:main}. 
\end{proof}

Notice that in case of a grounded intersecting family of pseudo-segments, we have $s=t+1$, so $O(sn)=O(tn)$, which proves Theorem~\ref{thm:grounded}.

\section{Quasi-grids and their occurrence}\label{sec:quasigrid}

\subsection{Notations and definitions}

We introduce several notations used in the paper. Let $g$ and $h$ be a pair of directed curves. If $g$ touches the left side of $h$, and they have the same direction at the touching point, then write $g \uuarr h$. (More precisely, let $\gamma$ be a circle around the intersection $P$ with a small enough radius so that it intersects both $a$ and $b$ twice, and the disk determined by $\gamma$ is disjoint  from all the other curves, and contains no other intersections of $a$ and $b$. We label the points where $g$ and $h$ enters $\gamma$ by $g$ and $h$, and assign the labels $g'$ and $h'$ to the points where they exit. We say that the right side of $g$ touches the left side of $h$ in $P$ if the counter-clockwise cyclic order of labels on $\gamma$ is $ghh'g'$.) Notice that this relation is not symmetric, i.e., $g \uuarr h \not\Leftrightarrow h \uuarr g$. If $g$ and $h$ have different directions at the touching point (so the counter-clockwise cyclic order of labels on $\gamma$ is $gg'hh'$ or $gh'hg'$), then write $g \duarr h$ or $g \udarr h$ depending on which side of $h$ is touched by $g$.} We say that $c_1$ and $c_2$ are $g$\emph{-touch equivalent} if they touch $g$ on the same side and in the same direction, i.e., $(g\uuarr c_1 \wedge g\uuarr c_2)$ or $(g\duarr c_1 \wedge g\duarr c_2)$ or $(c_1\uuarr g \wedge c_2\uuarr g)$ or $(c_1\udarr g \wedge c_2 \udarr g)$. A set of curves is $g$-touch equivalent if its elements are pairwise $g$-touch-equivalent.

For a directed curve $g$ with points $A$ and $B$ that lie on the curve in this order, let $A \tot{g} B$ be the closed directed subcurve from $A$ to $B$, and $B \backt{g} A$ will denote the reverse directed subcurve from $B$ to $A$. This notation can be iterated, e.g. if $P \in h \cap g$, then $A \tot{g} P \backt{h} Q$ denotes the curve which starts from $A \in g$, continues on $g$ to the intersection point $P$, then changes to $h$,  and goes on $h$ in reverse direction until it ends in $Q \in h$.  When referring to undirected subcurves, we use $A \unt{g} B$. Sometimes  these notations are also used to denote the ordering of points on a particular curve.

As already defined, $\cC(A,B)$ is the set of directed curves going from $A$ to $B$. For a curve $c \in \cC(A,B)$, let $c\st=c\setminus\{A,B\}$. For a set of curves $\mathcal{C}=\{c_1,c_2, \dots c_k\}$, let $\cC\st=\{c_1\st,c_2\st, \dots c_k\st\}$.

The objects called quasi-grids are the main tool of this paper. Intuitively, the below definition says that the incidences of a quasi-grid are exactly as shown in Figure~\ref{fig:qgrid}, with the exception of the points $X,Y,A$ and $B$ -- we allow these to coincide arbitrarily.

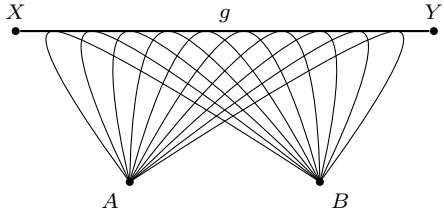
\begin{figure}
\begin{center}
\begin{tikzpicture}[x=0.5cm,y=0.4cm]
\begin{scriptsize}
\node (X) [nb, label={above:$X$}] at (0,0) {};
\node (Y) [nb, label={above:$Y$}] at (11,0) {};
\coordinate (P1) at (1,0) {};
\coordinate (P2) at (2,0) {};
\coordinate (P3) at (3,0) {};
\coordinate (P4) at (4,0) {};
\coordinate (P5) at (5,0) {};
\coordinate (P6) at (6,0) {};
\coordinate (P7) at (7,0) {};
\coordinate (P8) at (8,0) {};
\coordinate (P9) at (9,0) {};
\coordinate (P10) at (10,0) {};
\draw[thick] (X) -- node[above] {$g$}(Y);
\node (A) [nb, label={below left:$A$}] at (3,-5) {};
\node (B) [nb, label={below right:$B$}] at (8,-5) {};
\draw (A.center) .. controls ++(0,0) and ++(-1,0) .. (P1)
          .. controls ++(1,0) and ++(0,0) ..  (B.center);
\draw  (A.center) .. controls ++(0,0) and ++(-1,0) .. (P2)
          .. controls ++(1,0) and ++(0,0) ..  (B.center);
\draw  (A.center) .. controls ++(0,0) and ++(-1,0) .. (P3)
          .. controls ++(1,0) and ++(0,0) ..  (B.center);
\draw  (A.center) .. controls ++(0,0) and ++(-1,0) .. (P4)
          .. controls ++(1,0) and ++(0,0) ..  (B.center);
\draw (A.center) .. controls ++(0,0) and ++(-1,0) .. (P5)
          .. controls ++(1,0) and ++(0,0) ..  (B.center);
\draw  (A.center) .. controls ++(0,0) and ++(-1,0) .. (P6)
          .. controls ++(1,0) and ++(0,0) ..  (B.center);
\draw  (A.center) .. controls ++(0,0) and ++(-1,0) .. (P7)
          .. controls ++(1,0) and ++(0,0) ..  (B.center);
\draw  (A.center) .. controls ++(0,0) and ++(-1,0) .. (P8)
          .. controls ++(1,0) and ++(0,0) ..  (B.center);
\draw  (A.center) .. controls ++(0,0) and ++(-1,0) .. (P9)
          .. controls ++(1,0) and ++(0,0) ..  (B.center);
\draw  (A.center) .. controls ++(0,0) and ++(-1,0) .. (P10)
          .. controls ++(1,0) and ++(0,0) ..  (B.center);
\end{scriptsize}
\end{tikzpicture}
\end{center}
\caption{A quasi-grid for the case $g\upuparrows c_i$. Swapping $X,Y$ or $A,B$ gives the other 3 cases.}\label{fig:qgrid}
\end{figure}

\begin{defn}[Quasi-grid]\label{defn:quasi-grid}
A set of curves $\cC=\{c_1,c_2,\dots,c_k\}\subseteq \cC(A,B)$ forms a \textit{quasi-grid} with respect to a curve $g \in \cC(X,Y)$ if:
\begin{enumerate}
\item $\cC\st \cup \{g\st\}$ is an intersecting family of pseudo-segments
\item $\cC\st$ is $g$-touch-equivalent with touching points $P_i = g \cap c_i$ 
\item $P_{i,j} = c_i\st \cap c_j\st$ is a crossing point
\item the ordering of points on $g$ is
$P_1 \unt{g} P_2 \unt{g} \dots \unt{g} P_k$
\item the ordering of points on $c_j$ ($j=1,2,\ldots,k$) is
\[A \tot{c_j} P_{1,j} \tot{c_j} P_{2,j} \tot{c_j} \dots \tot{c_j} P_{j-1,j} \tot{c_j} P_j  \tot{c_j} P_{j,j+1}  \tot{c_j} \dots \tot{c_j} P_{j,k} \tot{c_j} B.\]
\end{enumerate}
\end{defn}

An example for a quasi-grid can be seen in Figure~\ref{fig:qgrid}. Throughout the paper (if we do not indicate it otherwise) we assume that the indices of the curves in $\cC$ describe the order of their touching points on $g$.

\subsection{Finding quasi-grids in curve configurations}

The goal of this subsection is to prove that in a family of pseudo-segments, the set of $g$-touch equivalent curves with given endpoints form a constant number of quasi-grids. 
Intuitively, Lemma~\ref{lem:S1} shows that in a family of pseudo-segments, the $g$-touch equivalent curves from $\cC(A,B)$  (where $A\neq B$) can still have two distinct types. Note that these types cannot be defined separately, only in relation to each other. In Lemma~\ref{lem:2quasigrids}, we establish that the curves in each type form a quasi-grid with respect to $g$. Lemma \ref{lem:12quasigrids} examines the case $A=B$.

\begin{lemma}\label{lem:S1}
Fix a curve $g \in \cC(X,Y)$ and suppose that $c_1$ and $c_2$ are $g$-touch-equivalent curves from $\cC(A,B)$ with touching points $P_1$ and $P_2$ respectively, where $A \neq B$. Note that $P_1$ and $P_2$ divide $c_1$ and $c_2$ into their first and second parts. Suppose further that $\{c_1\st,c_2\st,g\st\}$ is a family of pseudo-segments. Then $c_1\st$ crosses $c_2\st$ at a point $Q$, which is either the intersection of the first part of $c_1$ with the second part of $c_2$, or vice versa: the intersection of the second part of $c_1$ with the first part of $c_2$.
\end{lemma}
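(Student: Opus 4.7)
The plan is to argue topologically via a Jordan curve, with $g$-touch-equivalence as the key constraint. Since $\{c_1\st,c_2\st,g\st\}$ is a pseudo-segment family, $c_1\st$ and $c_2\st$ meet in at most one point, so it suffices to show that they do meet, that the intersection is a crossing, and that it lies in the claimed region. Write $c_i$ as its first part $A\unt{c_i}P_i$ followed by its second part $P_i\unt{c_i}B$, and by symmetry assume that $P_1$ precedes $P_2$ along $g$.

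Suppose for contradiction that the conclusion fails. Then either $c_1\st$ and $c_2\st$ are disjoint, meet only in a touching, or cross at a point $Q$ that lies on both first parts simultaneously or on both second parts simultaneously. In each of these bad scenarios I would produce a Jordan curve $\Gamma \subseteq c_1 \cup c_2$ passing through both $P_1$ and $P_2$, traversed so that $c_1$ appears in its natural direction and $c_2$ in reverse. Concretely: if $c_1\st \cap c_2\st = \emptyset$, take $\Gamma = c_1 \cup c_2$; if $c_1$ and $c_2$ only touch at $Q$, a small perturbation near $Q$ replaces the self-touching by a pair of disjoint arcs, preserves the intersection pattern of $g$ with the arrangement, and reduces to the previous case; and if $c_1$ and $c_2$ cross at $Q$ within both first (resp.\ both second) parts, let $\Gamma$ be the Jordan loop formed by the two second (resp.\ first) parts.

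Next, orient $\Gamma$ so that ``inside'' lies on the left of the traversal. Because $c_2$ is traversed in reverse along $\Gamma$, the relation ``inside of $\Gamma$'' coincides with ``left of $c_1$'' at $P_1$ but with ``right of $c_2$'' at $P_2$; that is, it selects \emph{opposite} local sides of the two curves when they are read in their natural directions. On the other hand, $g$-touch-equivalence places $g$ on a \emph{common} local side of $c_1$ and $c_2$ at their touching points, directly in the $\uuarr$ patterns and via an orientation flip in the $\duarr/\udarr$ patterns. Combining these two observations, $g$ lies inside $\Gamma$ in a neighborhood of exactly one of $P_1, P_2$ and outside $\Gamma$ in a neighborhood of the other. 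But the only intersections of $g$ with $\Gamma$ are $P_1$ and $P_2$, both touchings, at which $g$ stays on the same side of $\Gamma$. Hence the subarc of $g$ from $P_1$ to $P_2$ cannot connect an inside neighborhood to an outside neighborhood, a contradiction.

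The main obstacle I anticipate is the sidedness bookkeeping: one has to verify carefully that all four $g$-touch-equivalence patterns really do force ``$g$ on the same local side of $c_1$ as of $c_2$'' in a sense compatible with the orientation on $\Gamma$, and one has to handle the self-touching subcase cleanly via a perturbation that does not disturb the intersections of $g$ with $c_1 \cup c_2$. Once that is established, the topological dichotomy ``inside at $P_1$ vs.\ outside at $P_2$'' is what does all the work.
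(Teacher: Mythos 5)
Your proposal is correct, but it takes a genuinely different route from the paper's. The paper argues in the forward direction: assuming (WLOG) $g\uuarr c_1$, $g\uuarr c_2$ and $P_1$ preceding $P_2$ on $g$, it first shows that the two first parts meet only at $A$, so that $\ell = A\tot{c_1}P_1\tot{g}P_2\backt{c_2}A$ is a Jordan curve; touch-equivalence then forces $P_1\tot{c_1}B$ and $P_2\tot{c_2}B$ to start on opposite sides of $\ell$, so the concatenation $P_1\tot{c_1}B\backt{c_2}P_2$ must cross $\ell$, and the only admissible locations for that crossing are the two stated ones. You instead keep $g$ out of the Jordan curve entirely: you enumerate the bad intersection patterns of $c_1\st$ and $c_2\st$, in each one build a Jordan curve $\Gamma\subseteq c_1\cup c_2$ through $P_1$ and $P_2$ on which $c_1$ runs forward and $c_2$ backward, and use touch-equivalence to place $g$ on opposite sides of $\Gamma$ near $P_1$ and near $P_2$, which the arc $P_1\unt{g}P_2$ (whose interior is disjoint from $\Gamma$) cannot reconcile. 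It is the same parity obstruction viewed from the dual side --- your probe is $g$, the paper's probe is the pair of second parts --- but your version treats all four touch-equivalence patterns uniformly and avoids the paper's preliminary sub-argument about the first parts, at the price of an explicit case analysis on where $c_1\st$ and $c_2\st$ meet.

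Two small repairs are needed. First, when $Q$ lies on both first parts, ``the Jordan loop formed by the two second parts'' is not closed: $P_1\tot{c_1}B$ and $P_2\tot{c_2}B$ meet only at $B$. The curve you want is $Q\tot{c_1}B\backt{c_2}Q$, which is a Jordan curve, contains both $P_1$ and $P_2$, and fits your stated orientation convention; symmetrically, use $A\tot{c_1}Q\backt{c_2}A$ when $Q$ lies on both second parts. Second, the side bookkeeping you flag does go through and should be spelled out once: in each of the four patterns the definition puts $g$ on the same side of $c_1$ at $P_1$ as of $c_2$ at $P_2$ relative to the curves' own orientations (for $c_i\uuarr g$ and $c_i\udarr g$ one must translate ``which side of $g$ is touched'' into ``which side of $c_i$ carries $g$,'' but the answer is the same for $i=1,2$), and it is the reversal of $c_2$ along $\Gamma$ that converts ``same side of the $c_i$'' into ``opposite sides of $\Gamma$.'' With these two points made explicit, the argument is complete.
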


\begin{proof}
Suppose (without loss of generality) that $g \uuarr c_1$, $g\uuarr c_2$, and that $P_1$ precedes $P_2$ on $g$. Consider the closed directed curve $\ell = A \tot{c_1} P_1 \tot{g} P_2 \backt {c_2} A$ (curves with gray halo in the middle and right part of Figure~\ref{fig:alaplem}). We show that $\ell$ is a Jordan-curve. Suppose for contradiction that $A \tot{c_1} P_1$ and $A \tot{c_2} P_2$ has an intersection point $H \neq A$ (see the left picture in Figure~\ref{fig:alaplem}). Since $\{c_1\st,c_2\st,g\st\}$ is a family of pseudo-segments, there can be no further intersections between $c_1\st$ and $c_2\st$. It follows that $\ell'= H \tot{c_1} P_1 \tot{g} P_2 \backt {c_2} H$ is a Jordan curve that separates the plane into its left and right (shaded) side regions. Notice that $P_1 \tot{c_1} B$ begins in the right side region of $\ell'$, while $P_2 \tot{c_2} B$ begins in the left side region by our assumptions $g \uuarr c_1$ and $g \uuarr c_2$. Since $P_1 \tot{c_1} B \backt{c_2} P_2$ is a continuous curve that begins and ends in different sides of $\ell'$, it must cross $\ell'$ in a point distinct from $H$; we arrived at a contradiction.

\begin{figure}
\begin{center}
\begin{tikzpicture}[x=0.5cm,y=0.3cm]
\begin{scriptsize}
\fill [gray!50] (1,-3).. controls (2,-1) and (1.6,0) ..  (2.45,0) -- (4.45,0) .. controls (2.45,0) and ++(1,0) ..(1,-3);
\node (X) [nb, label={above:$X$}] at (0,0) {};
\node (Y) [nb, label={above:$Y$}] at (6,0) {};
\node (P1) [nb, label={above:$P_1$}] at (2.45,0) {};
\node (P2) [nb, label={above:$P_2$}] at (4.45,0) {};
\node (A) [nb, label={below left:$A$}] at (0,-5) {};
\node (H) [nb, label={below right:$H$}] at (1,-3) {}; 
\draw[thick] (X) -- node[above] {$g$} (P1) -- 
(P2) -- (Y);
\draw [blue] (A) .. controls (H) and (A) .. (H)
          .. controls (2,-1) and (1.6,0) .. node[left] {$c_1$} (P1)
          .. controls (3,0) and (3,-0.3) .. (3,-0.3);
\draw [red] (A) .. controls ++(0,1) and ++(-1,0) .. (H)
          .. controls ++(1,0) and (P1) .. node[below right] {$c_2$} (P2)
          .. controls ++(0.5,0) and ++(0,0) .. ++(0.5,-0.3);
\end{scriptsize}
\end{tikzpicture}
\begin{tikzpicture}[x=0.5cm,y=0.3cm]
\begin{scriptsize}
\node (X) [nb, label={above:$X$}] at (0,0) {};
\node (Y) [nb, label={above:$Y$}] at (6,0) {};
\node (P1) [nb, label={above:$P_1$}] at (2,0) {};
\node (P2) [nb, label={above:$P_2$}] at (4,0) {};
\node (A) [nb, label={below left:$A$}] at (0,-5) {};
\node (B) [nb, label={below right:$B$}] at (6,-5) {};
\draw[thick] (X) --node[above] {$g$} (P1) -- (Y);
\draw [name path=c1](A.center) .. controls ++(0,0) and ++(-1,0) ..node[left] {$c_1$} (P1)
          .. controls ++(1,0) and ++(0,0) ..  (B.center);
\draw [name path=c2] (A.center) .. controls ++(0,0) and ++(-1,0) ..node[above right=2pt and -5pt] {$c_2$} (P2)
          .. controls ++(1,0) and ++(0,0) ..  (B.center);
\begin{scope}[on background layer, halo/.style={gray!60, line width=1.5mm}]
\draw [halo] (A.center).. controls ++(0,0) and ++(-1,0) ..  (P1.center) -- (P2.center) .. controls ++(-1,0) and ++(0,0) ..(A.center);
\end{scope}
\node [gray,draw=none,fill=none] at (2.4,-2.2) {$\ell$};
\node [nb, label={[label distance = 0pt]below:$Q$}, name intersections={of=c1 and c2}] at (intersection-2) {};
\end{scriptsize}
\end{tikzpicture}
\begin{tikzpicture}[x=0.5cm,y=0.3cm]
\begin{scriptsize}
\node (X) [nb, label={above:$X$}] at (0,0) {};
\node (Y) [nb, label={above:$Y$}] at (6,0) {};
\node (P1) [nb, label={above:$P_1$}] at (1.5,0) {};
\node (P3) [nb, label={above:$P_2$}] at (5,0) {};
\node (A) [nb, label={below left:$A$}] at (0,-5) {};
\node (B) [nb, label={below left:$B$}] at (3,-2) {};
\draw[thick] (X) --node[above] {$g$} (P1) -- (Y);
\draw [name path=c1](A) .. controls ++(0,0) and ++(-1,0) ..node[left=-2pt,pos=0.35] {$c_1$} (P1)
          .. controls ++(1,0) and ++(0,0) ..  (B);
\draw [name path=c2] (A) .. controls ++(6,0) and ++(-1,0) .. (P3.center)  
            .. controls ++(1,0) and ++(0,-3) .. node[below] {$c_2$} (7,0) 
            .. controls ++(0,3) and ++(0,3) .. (-1,0) 
            .. controls ++(0,-4) and (B.center) .. (B.center);
\begin{scope}[on background layer, halo/.style={gray!60, line width=1.5mm}]
\draw [halo] (A.center).. controls ++(0,0) and ++(-1,0) ..  (P1.center) -- (P3.center) .. controls ++(-1,0) and ++(6,0) ..(A.center);
\end{scope}
\node [gray,draw=none,fill=none] at (3.3,-3.4) {$\ell$};
\node [nb, label={[label distance = -2pt]above left:$Q$}, name intersections={of=c1 and c2}] at (intersection-1) {};

\end{scriptsize}
\end{tikzpicture}
\caption{Left: $c_1$ and $c_2$ cannot intersect before reaching $g$; middle and right: the possible configurations}\label{fig:alaplem}
\end{center}
\end{figure}
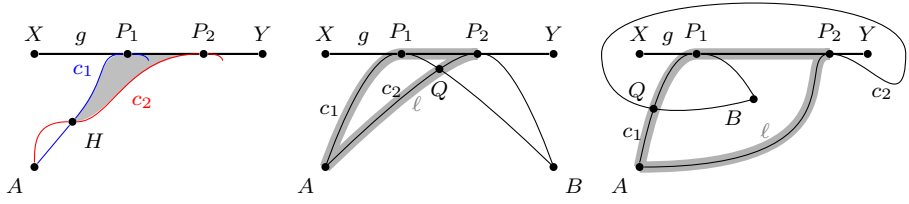

Thus $(A \tot{c_1} P_1) \cap (A \tot{c_2} P_2) = \{A\}$, hence $\ell$ is a Jordan-curve. By similar argument as above, $P_1 \tot{c_1} B \backt{c_2} P_2$ is a continuous curve that begins and ends in different sides of $\ell$, so it must cross $\ell$. Since $c_1$,  $c_2$ and $g$ are not self-intersecting and $P_1$ and $P_2$ already account for the intersections between $g$ and $\{c_1 \cup c_2\}$, the only remaining possibilities are that the crossing point is as claimed, i.e., $Q=(A \tot{c_2} P_2) \cap (P_1 \tot{c_1} B)$ or $Q=(A \tot{c_1} P_1) \cap (P_2 \tot{c_2} B)$ (see the middle and the right picture in Figure \ref{fig:alaplem}).
\end{proof}

Notice that the above lemma states that the curve $c_2$ meets $c_1$ before it meets $g$ if and only if the first part of $c_2$ crosses the second part of $c_1$ and vice versa: $c_2$ meets $g$ before it meets $c_1$ if and only if the second part of $c_2$ crosses the first part of $c_1$. This equivalence will be used several times in the following lemmas.

\begin{lemma}\label{lem:2quasigrids}
Let $g\in \cC(X,Y)$, and let $\cH$ be a set of $g$-touch-equivalent curves from $\cC(A,B)$, where $A \neq B$. If $\cH\st \cup \{g\st\}$ is a family of pseudo-segments, then $\cH$ is the disjoint union of at most two quasi-grids with respect to $g$.
\end{lemma}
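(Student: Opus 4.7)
The plan is to partition $\cH$ using the Case~I / Case~II dichotomy of Lemma~\ref{lem:S1} applied to pairs involving a distinguished pivot curve, and then verify that each part of the partition is a quasi-grid. Index $\cH = \{c_1, \ldots, c_n\}$ so that the touching points $P_1, \ldots, P_n$ appear in this order on $g$, and for each $c_i$ let $f_i = A \unt{c_i} P_i$ and $s_i = P_i \unt{c_i} B$ be the first and second part of $c_i$. Lemma~\ref{lem:S1} tells us that every pair $(c_i, c_j)$ with $i<j$ meets at a single point $Q_{i,j}$ that is either of \emph{Case~I} ($Q_{i,j} \in s_i \cap f_j$) or of \emph{Case~II} ($Q_{i,j} \in f_i \cap s_j$); in particular, the first parts pairwise meet only at $A$ and the second parts only at $B$. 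Set $\cH_1 = \{c_1\} \cup \{c_j : \{c_1, c_j\}\ \text{is Case~I}\}$ and $\cH_2 = \{c_j : \{c_1, c_j\}\ \text{is Case~II}\}$.

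The main step is to prove that every pair within $\cH_1$ and every pair within $\cH_2$ is Case~I. I would argue by contradiction: suppose $c_j, c_l$ lie in the same class with $j<l$ and $\{c_j, c_l\}$ is Case~II. Consider the closed curve
\[
\ell = A \tot{f_j} P_j \tot{g} P_l \backt{f_l} A,
\]
which, since $f_j \cap f_l = \{A\}$ and $f_j, f_l$ meet $g$ only at $P_j, P_l$, is a Jordan curve bounding a region $R$. By $g$-touch-equivalence one checks, as in the proof of Lemma~\ref{lem:S1}, that $s_j$ leaves $P_j$ into $R$. Moreover $s_j$ cannot re-cross $\ell$: it shares $c_j$ with $f_j$, meets $g$ only at $P_j$, and the Case~II assumption gives $Q_{j,l}\in f_j\cap s_l$, so $s_j\cap f_l = \emptyset$. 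Therefore $s_j \subset \overline R$, and using $A\neq B$ one concludes $B \in R$. Now look at $s_1$: it begins at $P_1$ (which is outside $\overline R$, because $P_1$ precedes $P_j$ on $g$) and ends at $B \in R$, so $|s_1 \cap \ell|$ must be odd. On the other hand $s_1 \cap \ell \subseteq f_j \cup f_l$, and a case check gives $|s_1 \cap \ell| = 2$ when $c_j, c_l \in \cH_1$ (the Case~I points $Q_{1,j}$ and $Q_{1,l}$) and $|s_1 \cap \ell| = 0$ when $c_j, c_l \in \cH_2$ (both Case~II, so $s_1$ avoids $f_j$ and $f_l$). Either way the count is even, the required contradiction.

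With every pair within each class known to be Case~I, conditions (1)--(4) of Definition~\ref{defn:quasi-grid} are immediate. For condition (5)---the ordering on each $c_j$---I would proceed by induction on $j$ using the topological fact that in a disk whose boundary carries $k$ entry points followed by $k$ exit points in two consecutive arcs, the unique non-crossing matching reverses labels. Applied in the region enclosed by $f_{j-1}$, the $g$-piece from $P_{j-1}$ to $P_j$, and $f_j$---with entries the already-ordered crossings on $f_{j-1}$ together with $P_{j-1}$, and exits the $j-1$ crossings on $f_j$---this forces the order $P_{1,j}, \ldots, P_{j-1,j}$ along $f_j$, and the mirror argument at $B$ handles $s_j$. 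The principal obstacle is the Jordan-curve argument in the main step: one must justify that $\ell$ is indeed a Jordan curve despite its tangential corners at $P_j$ and $P_l$, correctly identify the side of $\ell$ into which $s_j$ emerges (this is precisely where $g$-touch-equivalence enters), and confirm that the stated intersections exhaust $s_1 \cap \ell$.
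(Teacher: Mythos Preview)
Your approach is correct and follows a genuinely different route from the paper. Both use the same partition of $\cH$ (pivoting on the curve $c_1$ whose touching point comes first on $g$), but diverge in how the quasi-grid axioms are verified on each part.

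The paper never isolates your ``main step'' as a separate claim. Instead it proves directly, by induction on $|\cH_1|$, that $\cH_1$ is a quasi-grid: the inductive step first pins down where the new crossing $P_{1,\ell}$ falls on $h_1$ via a Jordan-curve contradiction, and then invokes the induction hypothesis twice---once on $\{h_1,\dots,h_{\ell-1}\}$ and once on $\{h_1,h_3,\dots,h_\ell\}$---to force the remaining orderings. For $\cH_2$ the paper does not repeat this; it shows that every curve of $\cH_2$ meets the first curve $h'_1\in\cH_2$ before $g$, reducing to the $\cH_1$-type induction with a new pivot. Your parity argument with $s_1$ handles both classes symmetrically in one stroke and is arguably cleaner; it also makes explicit the fact (only implicit in the paper) that \emph{every} pair inside each class is Case~I. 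Your verification of condition~(5) via rainbow matchings in the strip regions is valid once Case~I is known, because the second parts $s_i$ are then pairwise disjoint away from $B$ (and dually the first parts $f_i$ away from $A$), so the arcs in each strip really are non-crossing and the order is forced. The paper's double induction is more hands-on here; your two separate inductions (upward for the $f_j$-order, the ``mirror'' downward induction for the $s_j$-order) give a more conceptual picture of why the ordering is determined.
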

\begin{proof}
We deal with the case $g \upuparrows h$ for all $h\in \cH$, the other three cases are similar.
Let $h\in \cH$ be the curve that has the first touching point on $X\tot{g}Y$ among the curves from $\cH$. Let $\cH_1\subseteq \cH$ consist of $h$ and the curves from $\cH$ that meet $h$ before they meet $g$. Let $\cH_2:=\cH\setminus\cH_1$. We prove that $\cH_1$ and $\cH_2$ are both quasi-grids with respect to $g$.

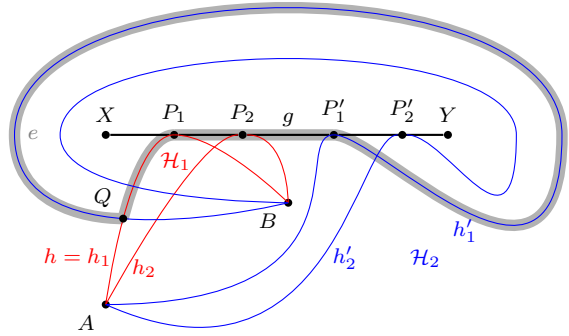
\begin{figure}
\begin{center}
\begin{tikzpicture}[x=0.6cm,y=0.45cm]
\begin{scriptsize}
\clip (-2.1,-5.8) rectangle (10.1,3.9);
\node (X) [nb, label={above:$X$}] at (0,0) {};
\node (Y) [nb, label={above:$Y$}] at (7.5,0) {};
\node (P1) [nb, label={above:$P_1$}] at (1.5,0) {};
\node (P2) [nb, label={above:$P_2$}] at (3,0) {};
\node (P3) [nb, label={above:$P'_1$}] at (5,0) {};
\node (P4) [nb, label={above:$P'_2$}] at (6.5,0) {};
\node (A) [nb, label={below left:$A$}] at (0,-5) {};
\node (B) [nb, label={below left:$B$}] at (4,-2) {};
\draw[thick] (X) -- (P1) --(P2)--node[above]{$g$} (P3) -- (Y);
\draw [name path = AP1,red] (A.center) .. controls ++(0,0) and ++(-1,0) ..node[left=-2pt,pos=0.35] {$h=h_1$} node[right=3pt, pos=0.75]{$\mathcal{H}_1$} (P1.center);
\draw[red] (P1.center) .. controls ++(1,0) and ++(0,0) ..  (B.center);
\draw [red] (A.center) .. controls ++(0,0) and ++(-1,0) ..node[right=-1pt,pos=0.3] {$h_2$} (P2)
          .. controls ++(1,0) and ++(0,0) ..  (B.center);
\draw[blue] (A.center) .. controls ++(6,0) and ++(-1,0) .. (P3.center);
\draw[blue] [name path = P3B] (P3.center)  
            .. controls ++(1,0) and ++(0,-6) .. node[below] {$h'_1$} (10,0) 
            .. controls ++(0,5) and ++(0,5) .. (-2,0) 
            .. controls ++(0,-4) and (B.center) .. (B.center);
\draw[blue]  (A.center) .. controls ++(5,-3) and ++(-1,0) .. node[right=1pt] {$h'_2$} node[right=30pt]{$\mathcal{H}_2$} (P4)  
            .. controls ++(1,0) and ++(0,-4) ..  (9,0) 
            .. controls ++(0,3) and ++(0,3) .. (-1,0) 
            .. controls ++(0,-2) and (B.center) .. (B.center);
\node(Q) [nb, label={[label distance = 0pt]above left:$Q$}, name intersections={of=AP1 and P3B}] at (intersection-1) {};
\begin{scope}[on background layer, halo/.style={gray!60, line width=1.5mm}]
\draw [halo] (Q.center).. controls ++(0,0) and ++(-0.7,0) .. (P1.center);
\draw [halo] (P1.center) --(P3.center);
\draw [halo] (P3.center)  
            .. controls ++(1,0) and ++(0,-6) .. (10,0) 
            .. controls ++(0,5) and ++(0,5) ..  (-2,0) node[right,gray]{$e$}
            .. controls ++(0,-2) and ++(-0.9,0) .. (Q.center);
\end{scope}
\end{scriptsize}
\end{tikzpicture}
\caption{Two quasi-grids with respect to $g$.}\label{fig:2grids}
\end{center}
\end{figure}

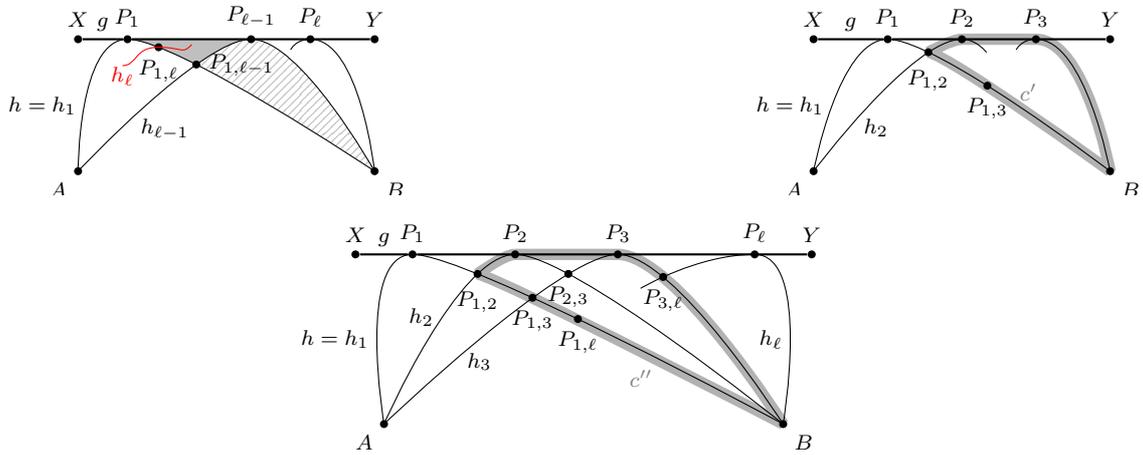
\begin{figure}
\begin{center}
\begin{tikzpicture}[x=0.65cm,y=0.35cm]
\pgfdeclarelayer{background}
\begin{scriptsize}
\clip (-1.6,-5.9) rectangle (6.6,3.9);
\node (X) [nb, label={above:$X$}] at (0,0) {};
\node (Y) [nb, label={above:$Y$}] at (6,0) {};
\node (P1) [nb, label={above:$P_1$}] at (1,0) {};
\node (P2) [nb, label={above:$P_{\ell-1}$}] at (3.5,0) {};
\node (P3) [nb, label={above:$P_{\ell}$}] at (4.7,0) {};
\node (A) [nb, label={below left:$A$}] at (0,-5) {};
\node (B) [nb, label={below right:$B$}] at (6,-5) {};
\draw[thick] (X) -- node[above] {$g$}(P1) -- (Y);

\draw (A.center) .. controls ++(0,0) and ++(-1,0) ..node[left] {$h=h_1$} (P1.center);
\draw [name path = P1B](P1.center) .. controls ++(1,0) and ++(0,0) .. node (P13) [nb, pos=0.15, label={below:$P_{1,\ell}$}] {} (B.center);
\draw [name path = AP2] (A.center) .. controls ++(0,0) and ++(-1,0) ..node[below right=2pt and -5pt] {$h_{\ell-1}$} (P2.center);
\draw (P2.center) .. controls ++(1,0) and ++(0,0) .. (B.center);
\draw  (P3.center) .. controls ++(1,0) and ++(0,0) ..  (B.center);
\draw  (P3.center) .. controls ++(-0.3,0) and ++(0,0) ..  ++(-0.4,-0.4);
\node (P12) [nb, label={right:$P_{1,\ell-1}$}, name intersections={of=P1B and AP2}] at (intersection-1) {};
\draw [red] (0.9,-1) .. controls ++(0.3,0) and ++(-0.5,0) .. (P13.center)
                     .. controls ++(0.5,0) and ++(-0.2,-0.4) .. (2.3,-0.2);
 \node [red, draw=none, fill=none] at (0.9,-1.4) {$h_{\ell}$};

\begin{scope}[on background layer]
  \draw [draw=none, pattern=north east lines, pattern color=gray!50]
  (P1.center) .. controls ++(1,0) and ++(0,0) .. (B.center) 
            .. controls ++(0,0) and ++(1,0) .. (P2.center) -- (P1.center);
  \clip (A.center) .. controls ++(0,0) and ++(-1,0) .. (P2.center) -- (X.center) -- (A.center);
  \fill [gray!50] (P1.center) .. controls ++(1,0) and ++(0,0) .. (B.center) -- (Y.center) --   (P1.center);
\end{scope}
\end{scriptsize}
\end{tikzpicture}
\hfill
\begin{tikzpicture}[x=0.65cm,y=0.35cm]
\pgfdeclarelayer{background}
\begin{scriptsize}
\clip (-1.2,-5.9) rectangle (6.6,3.9);
\node (X) [nb, label={above:$X$}] at (0,0) {};
\node (Y) [nb, label={above:$Y$}] at (6,0) {};
\node (P1) [nb, label={above:$P_1$}] at (1.5,0) {};
\node (P2) [nb, label={above:$P_2$}] at (3,0) {};
\node (P3) [nb, label={above:$P_3$}] at (4.5,0) {};
\node (A) [nb, label={below left:$A$}] at (0,-5) {};
\node (B) [nb, label={below right:$B$}] at (6,-5) {};
\draw[thick] (X) -- node[above] {$g$}(P1) -- (Y);
\draw (A.center) .. controls ++(0,0) and ++(-1,0) ..node[left] {$h=h_1$} (P1.center);
\draw [name path = P1B](P1.center) .. controls ++(1,0) and ++(0,0) .. node (P13) [nb, pos=0.4, label={below:$P_{1,3}$}] {} (B.center);
\draw [name path = AP2] (A.center) .. controls ++(0,0) and ++(-1,0) ..node[below right=2pt and -5pt] {$h_2$} (P2.center);
\draw (P2.center) .. controls ++(0.3,0) and ++(0,0) ..  ++(0.5,-.5);
\draw  (P3.center) .. controls ++(1,0) and ++(0,0) ..  (B.center);
\draw  (P3.center) .. controls ++(-0.3,0) and ++(0,0) ..  ++(-0.4,-0.4);
\node (P12) [nb, label={[label distance = 2pt]below:$P_{1,2}$}, name intersections={of=P1B and AP2}] at (intersection-1) {};
\begin{scope}[on background layer, halo/.style={gray!60, line width=1.5mm}]
\draw [halo] (P2.center) -- (P3.center)  .. controls ++(1,0) and ++(0,0) .. (B.center);
\draw [halo](P12.center) .. controls ++(0.5,-0.2) and ++(0,0) ..node[above,gray]{$c'$} (B.center);
\draw [halo](P12.center) .. controls ++(0,0) and ++(-0.2,0) .. (P2.center);
\end{scope}
\end{scriptsize}
\end{tikzpicture}

\vspace{-1cm}
\begin{tikzpicture}[x=0.75cm,y=0.45cm]
\pgfdeclarelayer{background}
\begin{scriptsize}
\clip (-2,-5.8) rectangle (7.5,3.9);
\node (X) [nb, label={above:$X$}] at (-1,0) {};
\node (Y) [nb, label={above:$Y$}] at (7,0) {};
\node (P1) [nb, label={above:$P_1$}] at (0,0) {};
\node (P2) [nb, label={above:$P_2$}] at (1.8,0) {};
\node (P3) [nb, label={above:$P_3$}] at (3.6,0) {};
\node (Pl) [nb, label={above:$P_{\ell}$}] at (6,0) {};
\node (A) [nb, label={below left:$A$}] at (-0.5,-5) {};
\node (B) [nb, label={below right:$B$}] at (6.5,-5) {};
\draw[thick] (X) --node[above] {$g$} (P1) -- (Y);
\draw [name path = AP3] (A.center) .. controls ++(0,0) and ++(-1,0) ..node[below=2pt] {$h_3$} (P3.center);
\draw [name path = P3B] (P3.center) .. controls ++(1,0) and ++(0,0) .. (B.center);
\draw (A.center) .. controls ++(0,0) and ++(-1,0) ..node[left] {$h=h_1$} (P1.center);
\draw [name path = P1B] (P1.center) .. controls ++(1,0) and ++(0,0) .. node (P1l) [nb, pos=0.42, label={below:$P_{1,\ell}$}] {} (B.center);
\draw [name path = AP2] (A.center) .. controls ++(0,0) and ++(-1,0) ..node[above left=2pt and -5pt] {$h_2$} (P2.center);
\draw [name path = P2B] (P2.center) .. controls ++(1,0) and ++(0,0) ..  (B.center);
\draw  (Pl.center) .. controls ++(1,0) and ++(0,0) .. node[left=0pt] {$h_{\ell}$} (B.center);
\draw [name path = PlA] (Pl.center) .. controls ++(-1,0) and ++(0,0) ..  ++(-2,-1);
\node (P12) [nb, label={[label distance = 2pt]below:$P_{1,2}$}, name intersections={of=P1B and AP2}] at (intersection-1) {};
\node [nb, label={[label distance = 0pt]below:$P_{2,3}$}, name intersections={of=AP3 and P2B}] at (intersection-1) {};
\node [nb, label={[label distance = 0pt]below:$P_{1,3}$}, name intersections={of=P1B and AP3}] at (intersection-1) {};
\node [nb, label={[label distance = 0pt]below:$P_{3,\ell}$}, name intersections={of=P3B and PlA}] at (intersection-1) {};
\begin{scope}[on background layer, halo/.style={gray!60, line width=1.5mm}]
\draw [halo](P2.center) -- (P3.center) .. controls ++(1,0) and ++(0,0) .. (B.center);
\draw [halo](P12.center).. controls ++(0.5,-0.2) and ++(0,0) .. node[below=2pt,gray]{$c''$} (B.center);
\draw [halo](P12.center).. controls ++(0,0) and ++(-0.2,0) ..(P2.center);
\end{scope}
\end{scriptsize}
\end{tikzpicture}
\caption{Top left: if $P_{1,\ell}$ is on $P_1 \tot{h_1} P_{1,\ell-1}$; top right: the case $|\cH_1|=\ell = 3$; bottom: the case $\ell \ge 4$.}\label{fig:sked}
\end{center}
\end{figure}

Let $\cH_1=\{h=h_1,h_2,\ldots,h_\ell\}$ and let $P_i=g\cap h_i$. Assume without loss of generality that $P_1 \tot{g} P_2 \tot{g} \dots \tot{g} P_\ell$. We show that $\cH_1$ is a quasi-grid with respect to $g$. (See Figure~\ref{fig:2grids}.)

The proof is by induction on the number of curves in $\cH_1$, for $\ell=1$ the statement is trivial. Lemma \ref{lem:S1} yields the statement for $\ell=2$.

We claim that $h_\ell$ crosses $h_1$ between $P_{1,\ell-1}$ and $B$.  By the definition of $\cH_1$ and Lemma \ref{lem:S1}, $P_{1,\ell}$ must lie on $P_1\tot{h_1} B$. Suppose for contradiction that the ordering on $h_1$ is $P_1 \tot{h_1} P_{1,\ell} \tot{h_1} P_{1,\ell-1}$ (top left of Figure~\ref{fig:sked}).
Consider the closed Jordan curve $c=P_1\tot{g}P_{\ell-1}\backt{h_{\ell-1}}P_{1,\ell-1}\backt{h_1}P_1$. (The right side region of $c$ is shaded.)

Notice that $A$ and $B$ are on the left side of $c$. To see this, consider that $c$ is made up of three curve segments, and there can be no further intersections among these three curves, so $h_1$ and $c \setminus (P_1 \tot{h_1} P_{1,\ell-1})$ are disjoint. Since the type of touching at $P_1$ is $g \uuarr h_1$, we can see that $(A \tot{h_1} P_1)\setminus {P_1}$ and consequently point $A$ in particular lies in the left side region of $c$. A similar argument for $h_{\ell-1}$ shows that $B$ is also in the left side region.

Since $h_{\ell}$ is already crossing $c$ once at $P_{1,\ell}$, it has to cross it one more time, because its endpoints $A$ and $B$ are on the same side of $c$. Since $h_{\ell}$ touches $g$ outside $c$ and it already has an intersection with $h_1$, it will cross $P_{\ell-1}\backt{h_{\ell-1}}P_{1,\ell-1}$. Now $h_{\ell}$ has entered the right side of $P_{1,\ell-1} \tot{h_{\ell-1}} B \backt{h_1} P_{1,\ell-1}$ (the region with the line pattern), while $P_{\ell}$ is on the left side (since $g\uuarr h_{\ell-1}$). So $h_{\ell}$ would have to cross $h_1$ or $h_{\ell-1}$ one more time, which is a contradiction.

If $\ell=3$, then consider the curve $c'=P_2\tot{g}P_3\tot{h_3}B\backt{h_1}P_{1,3}\backt{h_1}P_{1,2}\tot{h_2}P_2$ (see the top right of Figure~\ref{fig:sked}). Since $h_2$ and $h_3$ touch $g$ in the same direction, $h_3$ goes from $P_{1,3}$ to $P_3$ in the same side of $c'$ where $h_2$ goes from $P_2$ to $B$ (or a point on $(P_3\tot{h_3}B)$). Thus, $h_2$ and $h_3$ cross each other at a point $P_{2,3}=(P_{1,3}\tot{h_3}P_3) \cap (P_2\tot{h_2}B)$. By induction, the points on $h_1$ and $h_2$ are also in the required order.

For $\ell\ge 4$ the induction is used both for $h_1,h_2,\dots,h_{\ell-1}$ and $h_1,h_3,h_4,\dots,h_{\ell}$. We only need to show that $h_2$ and $h_\ell$ cross each other at a point $P_{2,\ell}$ which satisfies our ordering conditions. Indeed, on the right side of the curve
\[c''= P_2\tot{g}P_3\tot{h_3}P_{3,\ell}\tot{h_3}B\backt{h_1}P_{1,\ell}\backt{h_1}P_{1,2}\tot{h_2}P_2,\]
there is a crossing $P_{2,\ell}=(P_{1,\ell}\tot{h_\ell}P_{3,\ell}) \cap (P_2\tot{h_2}B)$: this shows that the ordering on $h_\ell$ is correct (see the bottom of Figure \ref{fig:sked}). A similar argument shows that the ordering of points on $h_2$ is correct, one needs to consider the right side of the following closed curve:
\[P_{1,\ell-1}\tot{h_{\ell-1}}P_{2,\ell-1}\tot{h_{\ell-1}}P_{\ell-1,\ell}\tot{h_{\ell-1}}B\backt{h_1}P_{1,\ell}\backt{h_1}P_{1,\ell-1}.\]

We show that $\cH_2$ behaves similarly. Let $\cH_2=\{h'_1,h'_2,\ldots,h'_m\}$ and let $P'_i=g\cap h'_i$. Again, suppose that $P'_1 \tot{g} P'_2 \tot{g} \dots \tot{g} P'_m$ (see Figure~\ref{fig:2grids}). By Lemma~\ref{lem:S1}, $h'_1$ must cross $h=h_1$ at a point $Q \in A\tot{h_1}P_1$, since $h'_1 \not\in \cH_1$.
Now consider the Jordan curve $e=P_1\tot{g}P'_1\tot{h'_1}Q\tot{h_1}P_1$. Again, it is easy to check that $e$ separates $A$ from $P'_j$ for $j\ge 2$. Consider a curve $h'_j \; (j\ge 2)$. It cannot meet $h_1$ before meeting $g$ since  $h'_j \not\in \cH_1$. Thus $A\tot{h'_j}P'_j$ must cross $e$ somewhere on $P'_1\tot{h'_1}Q$. We have reduced this  problem to the previous situation with $h'_1$ acting as $h_1$, so $\cH_2$ also forms a quasi-grid with respect to $g$.
\end{proof}

 In the proof of the next lemma, we use the \emph{touching graph} of a curve family of pseudo-segments. Let $\cH$ be a family of pseudo-segments. The \emph{touching graph} of $\cH$ is $G_{\cH} = (V,E)$ with $V = \cH$ and $E = \{ (a,b): a \asymp b \}$. The statement of this lemma is almost identical to the previous one, but considers the case when the endpoints of the quasi-grid coincide. In this case, we cannot prove that the curves are the union of at most two quasi-grids, but we can still bound the number of quasi-grids by a constant.

\begin{lemma}\label{lem:12quasigrids}
Let $g \in \cC(X,Y)$, and $\cH$ is a set of $g$-touch-equivalent curves from $\cC(A,A)$. If $\cH\st \cup \{g\st\}$ is an intersecting family of pseudo-segments, then $\cH$ is the disjoint union of at most 12 quasi-grids with respect to $g$.
\end{lemma}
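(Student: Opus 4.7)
The plan is to reduce to Lemma~\ref{lem:2quasigrids} by partitioning $\cH$ according to the cyclic local behavior of the curves near the common endpoint $A$, then applying Lemma~\ref{lem:2quasigrids} to each class after a local ``pinching'' of $A$ into two distinct points. The main obstacle is that Lemma~\ref{lem:S1}, which underlies Lemma~\ref{lem:2quasigrids}, can fail when $A=B$: the point $B$ no longer lies on a definite side of the Jordan curve $\ell$ used there, but coincides with a vertex of $\ell$. Extra combinatorial information about the configuration of the curves at $A$ is needed to restore the argument.

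First, I would choose a small open disk $D$ centered at $A$ that is disjoint from $g$ and contains no crossings or touchings between curves in $\cH$. Each curve $c\in\cH$ meets $\partial D$ at exactly two points: the outgoing point $s(c)$ and the incoming point $e(c)$. I would then fix an arbitrary reference curve $h\in\cH$; its endpoints $s(h),e(h)$ partition $\partial D$ into two open arcs $\alpha,\beta$. For every $c\neq h$, the unordered pair $\{s(c),e(c)\}$ admits one of exactly six cyclic positions relative to $\{s(h),e(h)\}$: both in $\alpha$ in one of the two orderings, both in $\beta$ in one of the two orderings, or one in each arc in one of the two assignments. This yields a partition $T_1,\dots,T_6$ of $\cH$, with $h$ itself assigned to one of the interleaved classes by taking $s(h),e(h)$ as closure limits of the arcs.

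For each class $T_i$, I would pick two close but distinct points $A_1^{(i)},A_2^{(i)}$ inside $D$ in positions compatible with the cyclic type of $T_i$ (for instance, close to $s(h)$ and $e(h)$ for the interleaved types), and modify every curve $c\in T_i$ inside $D$ by redrawing its two rays within $D$ as segments joining $A_1^{(i)}$ to $s(c)$ and $e(c)$ to $A_2^{(i)}$. For $D$ and $|A_1^{(i)} A_2^{(i)}|$ sufficiently small, a direct geometric check shows that the reconnecting segments are pairwise non-crossing inside $D$ regardless of whether the chords $\{s(c),e(c)\}$ of the curves of $T_i$ are nested, disjoint, or interleaved on $\partial D$. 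Consequently, the modification preserves $g$-touch equivalence, keeps $\cH\st\cup\{g\st\}$ an intersecting family of pseudo-segments, and turns the curves of $T_i$ into a subfamily of $\cC(A_1^{(i)},A_2^{(i)})$ with distinct endpoints. Lemma~\ref{lem:2quasigrids} then applies and yields at most two quasi-grids per class; these partitions lift back to the original curves because the quasi-grid property depends only on the ordering of touchings on $g$ and of crossings among the curves, all of which lie outside $D$ and are preserved by the modification. Summing over the six classes gives at most $6\cdot 2 = 12$ quasi-grids.

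The hardest part will be verifying the non-crossing claim for the reconnecting segments in every cyclic type, in particular in the ``same-arc'' types where the chords can be interleaved on $\partial D$. This ultimately rests on the fact that the unperturbed radii from $A$ to the points of $\partial D$ are pairwise non-crossing, and that the pinched segments converge uniformly to these radii as $|A_1^{(i)}A_2^{(i)}|\to 0$, so the non-crossing property persists under any sufficiently small perturbation.
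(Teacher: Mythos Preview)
Your pinching step has a genuine gap. The convergence argument (``the pinched segments converge uniformly to these radii, so the non-crossing property persists'') does not work: the limiting radii all \emph{share} the center $A$, so they are not a family of pairwise disjoint arcs whose disjointness is stable under perturbation. When you split $A$ into $A_1^{(i)},A_2^{(i)}$, the segment $A_1^{(i)}s(c)$ and the segment $A_2^{(i)}e(c')$ will cross near the center for many cyclic positions of $s(c),e(c')$, and in your four ``same-arc'' classes no single placement of $A_1^{(i)},A_2^{(i)}$ avoids this for all pairs simultaneously. There is also a structural obstruction: your six classes do not prevent two curves $c,c'$ in the same $T_i$ from \emph{touching} (rather than crossing) outside $D$. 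If such a pair existed and your pinching added no intersection inside $D$, the modified curves would be $g$-touch-equivalent members of $\cC(A_1^{(i)},A_2^{(i)})$ with $A_1^{(i)}\neq A_2^{(i)}$ whose unique intersection is a touching --- directly contradicting Lemma~\ref{lem:S1}, which forces any such pair to cross. So either your pinching introduces an extra intersection (destroying the pseudo-segment property) or the reduction to Lemma~\ref{lem:2quasigrids} is illegitimate; either way the argument breaks.

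The paper's proof supplies precisely the missing ingredient: it first shows, by a short geometric argument, that every $h\in\cH$ touches at most two other curves of $\cH$. Hence the touching graph of $\cH$ has maximum degree $2$ and is $3$-colorable (Brooks), and within each color class all pairs \emph{cross}. Pairwise crossing forces the $2k$ ends around $A$ to be antipodal (the two ends of each curve are $x_i$ and $x_{k+i}$ in the cyclic order), which is exactly the condition that makes dilating $A$ into two points crossing-free. Splitting each color class by direction and applying Lemma~\ref{lem:2quasigrids} then gives $3\cdot 2\cdot 2=12$ quasi-grids. The degree-$\le 2$ observation is the key idea your plan is missing.
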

\begin{proof}
Again, we only deal with the case $g \upuparrows h$ for all $h \in \cH$.
The first claim is that any $h \in \cH$ touches at most 2 other curves in $\cH$.
Since $h$ is a Jordan curve, it separates the plane into two regions, one of these contains $g$; denote this region by $R_g$, and the other by $R_n$ (see Figure~\ref{fig:12qgrid}, $R_n$ has a line pattern). Observe that no curve in $\cH$ touching $h$ can enter $R_n$ as such a curve cannot touch $g$. Let $P = g \cap h$. We prove that there is at most one curve in $\cH$ that touches $A\tot{h}P$. Suppose for contradiction that curves $h_1,h_2 \in \cH$ are both touching $A \tot{h} P$ at points $T_1$ and $T_2$ respectively, with the ordering $A \tot{h} T_1 \tot{h} T_2 \tot{h} P$. Let $Q_i = h_i \cap g$. Note that $A$ lies on the left side of the curve $c=Q_1 \tot{g} P \backt{h} T_1 \unt{h_1} Q_1$ since $g \uuarr h$. By an earlier observation, $h_2$ is disjoint from the open region $R_n$, which is the right side of $h$ --- so $h_2$ touches the left side of $h$ in $T_2$, i.e., $h_2\uuarr h$ or $h_2 \duarr h$. It follows that both $A \tot{h_2} T_2$ and $T_2 \tot{h_2} A$ must cross $c$, but this crossing can only happen along $T_1 \unt{h_1} Q_1$ because the other boundary curves $g$ and $h$ are touched by $h_2$. This means that $h_2\st$ and $h\st$ cross at least twice, which contradicts the basic properties of an intersecting family of pseudo-segments. A similar argument shows that there is at most one curve in $\cH$ that touches $P\tot{h}A$.

\begin{figure}
\begin{center}
\begin{tikzpicture}[x=0.8cm,y=0.4cm]
\begin{small}
\fill [pattern=north east lines, pattern color=gray!50] (3,-2.5) .. controls ++(0,1) and ++(-0.5,0) .. node [nb, label={left:$T_2$}] {} (4.5,0)
          .. controls ++(1,0) and ++(1,0) ..node[right] {$h$} (3,-5).. controls ++(1,1) and ++(0,-1) .. (3,-2.5);
\node (X) [nb, label={above:$X$}] at (0,0) {};
\node (Y) [nb, label={above:$Y$}] at (6,0) {};
\node (Q) [nb, label={above:$Q_1$}] at (1.5,0) {};
\node (P) [nb, label={above:$P$}] at (4.5,0) {};
\node (A) [nb, label={below left:$A$}] at (3,-5) {};
\node (T) [nb, label={left:$T_1$}] at (3,-2.5) {};
\node at (4,-2) {$R_n$}; 
\draw[thick] [->-] (X) --node[above]{$g$} (Q) --  (P);
\draw[thick] (P) -- (Y);
\draw [->-](A.center) .. controls ++(1,1) and ++(0,-1) .. (T.center);
\draw [->-](T.center) .. controls ++(0,1) and ++(-0.5,0) .. node [nb, label={left:$T_2$}] {} (P.center)
          .. controls ++(1,0) and ++(1,0) ..node[right] {$h$} (A.center);
\draw (A.center) .. controls ++(-1,1) and ++(0,-1) .. (T.center);
\draw (T.center) .. controls ++(0,1) and ++(0.5,0) ..  (Q.center)
          .. controls ++(-1,0) and ++(-1,0) ..node[left] {$h_1$} (A.center);
\begin{scope}[on background layer, halo/.style={gray!60, line width=1.5mm}]
\draw [halo](T.center) .. controls ++(0,1) and ++(0.5,0) ..(Q.center)--node[gray,above]{$c$} (P.center) .. controls ++(-0.5,0) and ++(0,1) .. (T.center);
\end{scope}
\end{small}
\end{tikzpicture}
\caption{Curve $h$ touches at most two other curves in $\cH$.}\label{fig:12qgrid}
\end{center}
\end{figure}

Consider the touching graph $G_\cH$. Our first observation implies that the maximal degree in $G_\cH$ is 2, thus by Brooks' theorem \cite{brooks}, $G_\cH$ is $3$-colorable. It is sufficient to prove that each color class is the disjoint union of at most 4 quasi-grids with respect to $g$.

Let $\cH_0 \subseteq \cH$ be a color class; consequently, it cannot contain a touching pair of curves, i.e., the curves in $\cH$ are pairwise intersecting. Let $k = |\cH_0|$. In this paragraph, an ending of a directed curve refers to one of the endings of its undirected version. Consider the cyclic order of the curve endings of $\cH_0$ around $A$: $x_1,x_2,\ldots,x_{2k}$. Each curve appears exactly twice in this sequence. Each pair of curves in $\cH_0$ crosses, hence $x_i$ and $x_{k+i}$ belong to the same curve for each $i \in \{ 1,\dots,k \}$. Therefore we may dilate $A$ to two points $A_1$ and $A_2$ such that endings $x_1,\dots,x_k$ are at $A_1$ and endings $x_{k+1},\dots,x_{2k}$ are at $A_2$. Now $\cH_0$ can be considered as a family of $A_1 \unt{} A_2$ curves, which is the union of $\cH_{12}$ containing $A_1 \tot{} A_2$ curves and $\cH_{21}$ containing $A_2 \tot{} A_1$ curves. According to Lemma \ref{lem:2quasigrids}, both $\cH_{12}$ and $\cH_{21}$ are the union of at most two quasi-grids with respect to $g$.
\end{proof}

The above Lemmas also imply the following one:

\begin{lemma}\label{lem:48quasigrids}
Let $A,B,C,D$ be not necessarily distinct points on the plane. Let $g \in \cC(A,B)$, and let $\cC_0 \subset \cC(C,D)$ be a finite curve family such that $\{g\} \cup \cC_0$ is an intersecting family of pseudo-segments with all $h \in \cC_0$ touching $g$. Then $\cC_0$ is the disjoint union of at most 48 quasi-grids with respect to $g$.
\end{lemma}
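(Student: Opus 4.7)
The plan is to derive this statement directly from Lemmas \ref{lem:2quasigrids} and \ref{lem:12quasigrids} by partitioning $\cC_0$ according to how each curve touches $g$. Since $\{g\} \cup \cC_0$ is an intersecting family of pseudo-segments and every $h \in \cC_0$ touches $g$, the unique intersection of $h$ with $g$ is a touching, and its local configuration (direction and side) falls into exactly one of the four categories $g \uuarr h$, $g \duarr h$, $h \uuarr g$, $h \udarr g$ appearing in the definition of $g$-touch equivalence. Grouping the curves of $\cC_0$ by the type of their touching with $g$ therefore partitions $\cC_0$ into at most four $g$-touch equivalent subfamilies $\cH_1, \cH_2, \cH_3, \cH_4 \subseteq \cC(C,D)$.

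Next I would apply the appropriate earlier lemma to each $\cH_i$ separately. For each $i$, the family $\cH_i^\star \cup \{g^\star\}$ is a subfamily of $\cC_0^\star \cup \{g^\star\}$, and hence inherits the property of being an intersecting family of pseudo-segments; in particular, the hypotheses of both Lemma \ref{lem:2quasigrids} and Lemma \ref{lem:12quasigrids} are satisfied. If $C \neq D$, Lemma \ref{lem:2quasigrids} shows that each $\cH_i$ is the disjoint union of at most $2$ quasi-grids with respect to $g$, so $\cC_0$ decomposes into at most $4 \cdot 2 = 8$ quasi-grids. If $C = D$, Lemma \ref{lem:12quasigrids} yields at most $12$ quasi-grids per class, for a total of at most $4 \cdot 12 = 48$. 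Taking the weaker of the two bounds covers both cases and gives the claimed $48$.

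The argument is essentially bookkeeping, so I do not anticipate any genuine obstacle: all of the geometric work has already been carried out in the preceding two lemmas. The only minor point worth checking is that the four touching types enumerated above truly exhaust all local configurations at a touching between two directed curves (two choices of relative orientation combined with two choices of which side is touched), and that the intersecting pseudo-segment property is preserved under passage to subfamilies. Both facts are immediate from the definitions, and with them in hand the lemma follows cleanly as a four-fold application of Lemmas \ref{lem:2quasigrids} and \ref{lem:12quasigrids}.
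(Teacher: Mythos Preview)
Your proposal is correct and follows exactly the same route as the paper's own proof: partition $\cC_0$ into the four $g$-touch equivalence classes and apply Lemma~\ref{lem:2quasigrids} or Lemma~\ref{lem:12quasigrids} to each, taking the worst-case bound $4\cdot 12 = 48$. Your observation that the $C\neq D$ case actually yields at most $8$ quasi-grids is a valid refinement the paper does not bother to state.
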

\begin{proof}
$\cC_0$ is the disjoint union of at most four $g$-touching equivalence classes ($g\uuarr h$, $g\duarr h$, $h \uuarr g$ and $h \udarr g$). By lemmas \ref{lem:2quasigrids} and \ref{lem:12quasigrids}, each such class can be decomposed into at most 12 quasi-grids.
\end{proof}

\section{Touching quasi-grids with external curves}\label{sec:mainprf}

\begin{lemma}\label{lem:touch2lem}
Let $g$ be any curve in $\cC(A,B)$ and let $\cH = \{h_1,h_2,h_3\} \subseteq \cC(C,D)$ be a quasi-grid with respect to $g$ (possibly a part of a larger quasi-grid). Suppose that for a curve $g' \in \cC(A,B)$, the set of five curves $\{g,g',h_1,h_2,h_3\}$ is an intersecting family of pseudo-segments and $g'$ touches the middle curve $h_2 \in \cH$. Then $g'$ must also touch at least one more among $\{g,h_1,h_3\}$.
\end{lemma}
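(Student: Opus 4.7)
The plan is a proof by contradiction: suppose $g'$ touches $h_2$ at a point $T$, but that none of the intersections $g'\cap g$, $g'\cap h_1$, $g'\cap h_3$ is a touching. Then each is a transversal crossing at a unique point $Q$, $Q_1$, $Q_3$ respectively (by the intersecting pseudo-segment hypothesis).

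First I would close up $g'$ into a Jordan curve $\Gamma$ by appending a Jordan arc $\gamma$ from $B$ back to $A$ lying entirely in the unbounded face of the arrangement of $\{g,h_1,h_2,h_3\}$; this $\gamma$ is disjoint from all four curves by construction. Then $\Gamma:=g'\cup\gamma$ is a closed Jordan curve whose meetings with each $h_i$ are exactly the meetings of $h_i$ with $g'$.

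The main argument then exploits the fact that $h_1, h_2 \in \cC(C,D)$ share both endpoints $C$ and $D$. Traversing $h_2$ from $C$ to $D$, the only meeting with $\Gamma$ is the tangent touching at $T$, which does not switch sides of $\Gamma$; hence $C$ and $D$ lie on the same side of $\Gamma$. On the other hand, traversing $h_1$ from $C$ to $D$, the only meeting with $\Gamma$ is the transversal crossing at $Q_1$, which switches sides; hence $C$ and $D$ lie on opposite sides of $\Gamma$. This is a contradiction, provided $C,D\notin g'$.

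The hard part will be the degenerate case $\{A,B\}\cap\{C,D\}\neq\emptyset$ --- say $A=C$ --- where $C$ lies on $g'$ as an endpoint and the side of $\Gamma$ at $C$ is not globally defined. In that case the side-switching argument must be carried out locally at the shared vertex using the cyclic order of $g',h_1,h_2,h_3$ emanating from it. The quasi-grid conditions on the orderings of the $P_{i,j}$'s force $h_2$ to lie cyclically between $h_1$ and $h_3$ at this vertex, and then no choice of direction for $g'$ can place $h_2$ on one local side of $\Gamma$ while keeping $h_1$ and $h_3$ together on the other side --- this final angular impossibility yields the contradiction in the remaining cases.
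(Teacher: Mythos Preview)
Your parity idea for the generic case is sound and is a different route from the paper's region-tracing argument, though two details need care: you have not justified that $A$ and $B$ lie on the unbounded face of the arrangement of $\{g,h_1,h_2,h_3\}$, and even if they do, $\gamma$ may cross $g'$ (which is not in that arrangement), so $\Gamma$ need not be a Jordan curve. Both issues are fixable --- for instance, take a slight perturbation of $g$ itself as the closing arc (since $g$ only \emph{touches} the $h_i$, it stays in one component of $\mathbb{R}^2\setminus(h_1\cup h_2)$), or replace the Jordan curve theorem by the mod-$2$ intersection number of closed curves in the plane. With that fix your generic-case argument in fact never uses $h_3$: it already shows that $g'$ must touch $g$ or $h_1$.

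The genuine gap is the degenerate case. Your sketch rests on the assertion that, because $h_2$ lies cyclically between $h_1$ and $h_3$ at the shared vertex, it cannot end up alone on one local side of $\Gamma$. But the two local sides at that vertex are determined by the positions of \emph{both} the outgoing arc of $g'$ and the incoming arc of $\gamma$ in the cyclic order; with $g'$ given and $\gamma$ constrained to reach $B$ through the complement of the $h_i$, nothing you have said prevents $g'$ and $\gamma$ from straddling $h_2$. You also do not address the cases where \emph{both} endpoints are shared (e.g.\ $A=C$ and $B=D$, or $A=B=C$), where neither $C$ nor $D$ has a well-defined side of $\Gamma$. The paper handles all of this by a different method: after showing $g'$ cannot enter the central cell $\cd{5}$, it performs a case analysis on which of the peripheral cells $\cd{1}$--$\cd{4}$ the curve $g'$ must enter in order to cross $h_1$ and $h_3$, shows each such entry forces a specific coincidence among $A,B,C,D$, and then eliminates each resulting configuration by a separate short parity or region argument. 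That analysis genuinely uses the cell structure contributed by $h_3$ and does not reduce to a single local angular obstruction at one vertex.
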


\begin{proof} Suppose that $g \uuarr h_i \; (i=1,2,3)$, the other cases are similar. The definition of quasi-grids enumerates all intersections between the four curves $h_1,h_2,h_3$ and $g$. It follows that the borders of the faces in the right side of $C\tot{h_1} P_1 \tot{g} P_3 \tot{h_3} D \backt{h_1} P_{1,3} \backt{h_3} C$ are determined (see the faces marked with encircled numbers in Figure~\ref{fig:touch2lem}). Notice that some (or all) of $A,B,C$ and $D$ may coincide, so the other faces are unknown. Let $\cd{1}$ be the right side of $C \tot{h_1} P_{1,2} \backt{h_2} C$. In the same manner, we assign numbers $\cd{2}-\cd{5}$ to some other faces as well, see Figure~\ref{fig:touch2lem}.

Suppose for contradiction that $g'$ crosses $h_1$, $h_3$ and $g$. We need the following claim to proceed with our proof.

\begin{figure}
\begin{center}
\begin{tikzpicture}[x=0.52cm,y=0.45cm]
\begin{scriptsize}
\node (A) [nb, label={above:$A$}] at (-1,0) {};
\node (B) [nb, label={above:$B$}] at (7,0) {};
\node (P1) [nb, label={above:$P_1$}] at (0.5,0) {};
\node (P2) [nb, label={above:$P_2$}] at (3,0) {};
\node (P3) [nb, label={above:$P_3$}] at (5.5,0) {};
\node (C) [nb, label={below left:$C$}] at (0.5,-5) {};
\node (D) [nb, label={below right:$D$}] at (5.5,-5) {};
\draw[thick] (A) --node[above] {$g$} (P1)-- (B);
\draw [name path=h1] (C.center) .. controls ++(0,0) and ++(-1,0) ..node[left] {$h_1$} (P1.center)
          .. controls ++(1,0) and ++(0,0) ..  (D.center);
\draw [name path=h2] (C.center) .. controls ++(0,0) and ++(-1.1,0) ..node[pos=0.45,left] {$h_2$} (P2.center)
          .. controls ++(1.1,0) and ++(0,0) ..  (D.center);
\draw [name path=h3] (C.center) .. controls ++(0,0) and ++(-1,0) ..node[pos=0.35, right] {$h_3$} (P3.center)
          .. controls ++(1,0) and ++(0,0) ..  (D.center);     
\node at (0.7,-1.7) {$\cd{1}$};
\node at (5.3,-1.7) {$\cd{2}$};
\node at (2,-2.2) {$\cd{3}$};
\node at (4,-2.2) {$\cd{4}$};
\node at (3,-1) {$\cd{5}$};
\node (P12) [nb, label={[label distance = 1pt]left:$P_{1,2}$}, name intersections={of=h1 and h2}] at (intersection-2) {};
\node (P13) [nb, label={[label distance = 2pt]below:$P_{1,3}$}, name intersections={of=h1 and h3}] at (intersection-2) {};
\node (P23) [nb, label={[label distance = 2pt]right:$P_{2,3}$}, name intersections={of=h2 and h3}] at (intersection-2) {};
\begin{scope}[on background layer, halo/.style={gray!60, line width=1.5mm}]
\draw [halo] (C.center) .. controls ++(0,0) and ++(-1.1,0) .. (P2.center)
          .. controls ++(1.1,0) and ++(0,0) ..  (D.center); 
\draw [halo]  (P13.center) .. controls ++(0,0) and ++(0,0) ..node[gray,below right=0.1cm and -0.2cm]{$c$} (D.center);
\draw [halo]  (C.center) .. controls ++(0,0) and ++(0,0) .. (P13.center);
\end{scope}

\end{scriptsize}
\end{tikzpicture}
\hfill
\begin{tikzpicture}[x=0.52cm,y=0.45cm]
\begin{scriptsize}
\coordinate (A) at (-1,0) {};
\coordinate (B) at (7,0) {};
\node (P1) [nb, label={above:$P_1$}] at (0.5,0) {};
\node (P2) [nb, label={above:$P_2$}] at (3,0) {};
\node (P3) [nb, label={above:$P_3$}] at (5.5,0) {};
\node (C) [nb, label={below left:$A=B=C$}] at (0.5,-5) {};
\node (D) [nb, label={left:$D$}] at (5.5,-4) {};
\draw[thick] (C.center) .. controls ++(0,0) and ++(-2,0) .. (A) --node[above left] {$g$} (P1)-- (B)
.. controls ++(2,0) and ++(10,-1.5) .. (C.center);
\draw [name path=h1] (C.center) .. controls ++(0,0) and ++(-1,0) ..node[left] {$h_1$} (P1.center)
          .. controls ++(1,0) and ++(0,0) ..  (D.center);
\draw [name path=h2] (C.center) .. controls ++(0,0) and ++(-1.1,0) ..node[pos=0.45,left] {$h_2$} (P2.center)
          .. controls ++(1.1,0) and ++(0,0) ..  (D.center);
\draw [name path=h3] (C.center) .. controls ++(0,0) and ++(-1,0) ..node[pos=0.35, right] {$h_3$} (P3.center)
          .. controls ++(1,0) and ++(0,0) ..  (D.center);
\node at (0.7,-1.7) {$\cd{1}$};
\node at (5.3,-1.7) {$\cd{2}$};
\node at (2,-2.2) {$\cd{3}$};
\node at (4.3,-2) {$\cd{4}$};
\node at (3,-1) {$\cd{5}$};
\node (P12) [nb, label={[label distance = 1pt]left:$P_{1,2}$}, name intersections={of=h1 and h2}] at (intersection-2) {};
\node (P13) [nb, label={[label distance = 2pt]below:$P_{1,3}$}, name intersections={of=h1 and h3}] at (intersection-2) {};
\node (P23) [nb, label={[label distance = 2pt]right:$P_{2,3}$}, name intersections={of=h2 and h3}] at (intersection-2) {};
\begin{scope}[on background layer, halo/.style={gray!60, line width=1.5mm}]
\draw [halo] (C.center) .. controls ++(0,0) and ++(-1.1,0) ..(P2.center) .. controls ++(1.1,0) and ++(0,0) ..  (D.center);
\draw [halo]  (P13.center) .. controls ++(0,0) and ++(0,0) ..node[gray,below right=0.1cm and -0.2cm]{$c$} (D.center);
\draw [halo]  (C.center) .. controls ++(0,0) and ++(0,0) .. (P13.center);
\end{scope}

\end{scriptsize}
\end{tikzpicture}

\begin{tikzpicture}[x=0.5cm,y=0.45cm]
\begin{scriptsize}
\coordinate (A) at (-1,0) {};
\coordinate (B) at (7,0) {};
\node (P1) [nb, label={above:$P_1$}] at (0.5,0) {};
\node (P2) [nb, label={above:$P_2$}] at (3,0) {};
\node (P3) [nb, label={above:$P_3$}] at (5.5,0) {};
\node (C) [nb, label={below left:$A=C$}] at (0.5,-5) {};
\node (D) [nb, label={below right:$B=D$}] at (5.5,-5) {};
\draw[thick] (C.center) .. controls ++(0,0) and ++(-2,0) .. (A)--node[above left] {$g$} (P1) -- (B)
.. controls ++(2,0) and ++(0,0) .. (D.center);
\draw [name path=h1] (C.center) .. controls ++(0,0) and ++(-1,0) ..node[left] {$h_1$} (P1.center)
          .. controls ++(1,0) and ++(0,0) ..  (D.center);
\draw [name path=h2] (C.center) .. controls ++(0,0) and ++(-1.1,0) ..node[pos=0.45,left] {$h_2$} (P2.center)
          .. controls ++(1.1,0) and ++(0,0) ..  (D.center);
\draw [name path=h3] (C.center) .. controls ++(0,0) and ++(-1,0) ..node[pos=0.35, right] {$h_3$} (P3.center)
          .. controls ++(1,0) and ++(0,0) ..  (D.center);
\node at (0.7,-1.7) {$\cd{1}$};
\node at (5.3,-1.7) {$\cd{2}$};
\node at (2,-2.2) {$\cd{3}$};
\node at (4,-2.2) {$\cd{4}$};
\node at (3,-1) {$\cd{5}$};
\node (P12) [nb, label={[label distance = 1pt]left:$P_{1,2}$}, name intersections={of=h1 and h2}] at (intersection-2) {};
\node (P13) [nb, label={[label distance = 2pt]below:$P_{1,3}$}, name intersections={of=h1 and h3}] at (intersection-2) {};
\node (P23) [nb, label={[label distance = 2pt]right:$P_{2,3}$}, name intersections={of=h2 and h3}] at (intersection-2) {};
\begin{scope}[on background layer, halo/.style={gray!60, line width=1.5mm}]
\draw [halo] (C.center) .. controls ++(0,0) and ++(-1.1,0) ..(P2.center) .. controls ++(1.1,0) and ++(0,0) ..  (D.center);
\draw [halo]  (P13.center) .. controls ++(0,0) and ++(0,0) ..node[gray,below right=0.1cm and -0.2cm]{$c$} (D.center);
\draw [halo]  (C.center) .. controls ++(0,0) and ++(0,0) .. (P13.center);
\end{scope}

\begin{scope}
\fill[draw=none, pattern=dots, pattern color=gray!50] (C.center) .. controls ++(0,0) and ++(-2,0) .. (A)--(P2)  .. controls ++(-1.1,0) and ++(0,0) ..(C.center);
\end{scope}
\begin{scope}
\fill[draw=none, pattern=crosshatch dots, pattern color=gray!50] (D.center) .. controls ++(0,0) and ++(1.1,0) .. (P2) -- (B)
.. controls ++(2,0) and ++(0,0) .. (D.center);
\end{scope}
\end{scriptsize}
\end{tikzpicture}
\hspace{-0.5cm}
\begin{tikzpicture}[x=0.5cm,y=0.45cm]
\begin{scriptsize}
\coordinate (A) at (-1,0) {};
\coordinate (B) at (7,0) {};
\node (P1) [nb, label={above:$P_1$}] at (0.5,0) {};
\node (P2) [nb, label={above:$P_2$}] at (3,0) {};
\node (P3) [nb, label={above:$P_3$}] at (5.5,0) {};
\node (C) [nb, label={below left:$A=B=C=D$}] at (3,-5) {};
\node (D) at (3,-5) {};
\draw[thick] (C.center) .. controls ++(-5,-1) and ++(-2,0) .. (A)--node[above left] {$g$} (P1) -- (B)
.. controls ++(2,0) and ++(5,-1) .. (D.center);
\draw [name path=h1] (C.center) .. controls ++(-3,0) and ++(-3,0) ..node[left] {$h_1$} (P1.center)
          .. controls ++(2,0) and ++(1,2) ..  (D.center);
\draw [name path=h2] (C.center) .. controls ++(-3,1) and ++(-1.1,0) ..node[pos=0.35,left] {$h_2$} (P2.center)
          .. controls ++(1.1,0) and ++(3,1) ..  (D.center);
\draw [name path=h3] (C.center) .. controls ++(-1,2) and ++(-2,0) .. (P3.center)
          .. controls ++(3,0) and ++(3,0) .. node[pos=0.65, right] {$h_3$} (D.center);
\node at (0.7,-1.7) {$\cd{1}$};
\node at (5.3,-1.7) {$\cd{2}$};
\node at (2.3,-1.7) {$\cd{3}$};
\node at (3.7,-1.7) {$\cd{4}$};
\node at (3,-1) {$\cd{5}$};
\node (P12) [nb, label={[label distance = 1pt]left:$P_{1,2}$}, name intersections={of=h1 and h2}] at (intersection-2) {};
\node (P13) [nb, label={[label distance = 1pt]below  right:$P_{1,3}$}, name intersections={of=h1 and h3}] at (intersection-2) {};
\node (P23) [nb, label={[label distance = 2pt]right:$P_{2,3}$}, name intersections={of=h2 and h3}] at (intersection-2) {};
\begin{scope}[on background layer, halo/.style={gray!60, line width=1.5mm}]
\draw [halo] (C.center) .. controls ++(-3,1) and ++(-1.1,0) .. (P2.center)
          .. controls ++(1.1,0) and ++(3,1) ..  (D.center);
\draw [halo]  (P13.center) .. controls ++(0.4,-0.7) and ++(0.4,0.7) ..node[gray,right]{$c$} (D.center);
\draw [halo]  (C.center) .. controls ++(-0.4,0.7) and ++(-0.4,-0.7) .. (P13.center);
\end{scope}

\begin{scope}
\fill[draw=none, pattern=dots, pattern color=gray!50] (C.center) .. controls ++(-5,-1) and ++(-2,0) .. (A)--(P2)   .. controls ++(-1.1,0) and ++(-3,1) ..(C.center);
\end{scope}
\begin{scope}
\fill[draw=none, pattern=crosshatch dots, pattern color=gray!50] (C.center).. controls ++(3,1) and ++(1.1,0) .. (P2) -- (B)
.. controls ++(2,0) and ++(5,-1) .. (D.center);
\end{scope}
\end{scriptsize}
\end{tikzpicture}

\end{center}
\caption{The figures for various possible equalities among $A,B,C$ and $D$.}\label{fig:touch2lem}
\end{figure}
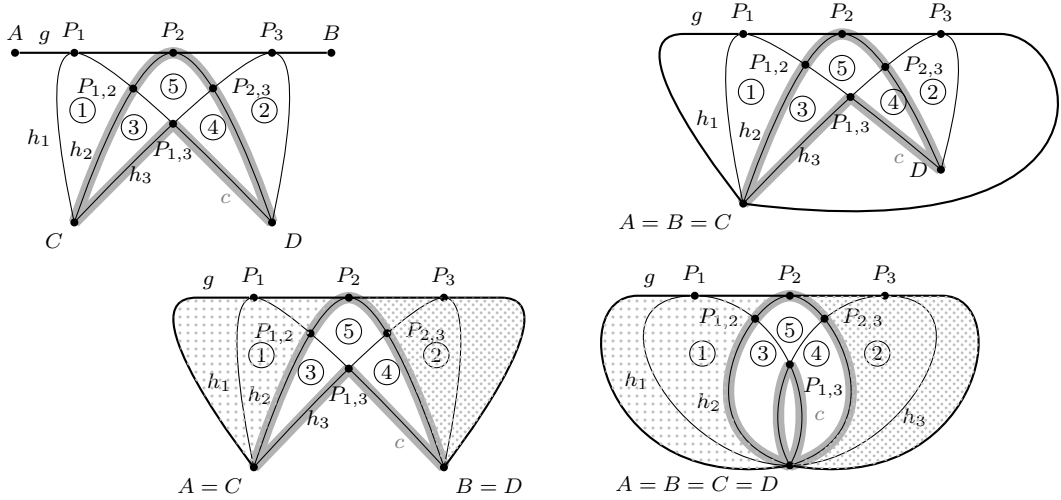

\begin{claim}\label{claim:touch2lem}
The curve $g'$ cannot enter region \emph{$\cd{5}$}.
\end{claim}

\begin{proof}
If $g'$ passes through $\cd{5}$, then -- since it touches $h_2$ --, it must cross both $P_{1,2} \tot{h_1} P_{1,3}$ and $P_{1,3} \tot{h_3} P_{2,3}$. Since there can be no more intersections with $h_1$ or $h_3$, the curve $g'$ cannot pass through the closed curve $c=C \tot{h_2} D \backt{h_1} P_{1,3} \backt{h_3} C$, therefore it cannot meet $g\st$ (see Figure \ref{fig:touch2lem}).
\end{proof}

Since $g'$ must cross $h_1$, it has to enter either region $\cd{1}$ or $\cd{4}$ (by Claim~\ref{claim:touch2lem} it cannot cross $P_{1,2}\tot{h_1}P_{1,3}$). If it enters $\cd{1}$, then --- since it has crossed $h_1$, one of its endpoints $A$ or $B$ has to be on the border of $\cd{1}$, thus either $A=C$ or $B=C$. If $g'$ enters $\cd{4}$, then by Claim~\ref{claim:touch2lem}, one of the endpoints is $D$, so $A=D$ or $B=D$.
The curve $g'$ also needs to cross $h_3$, so it enters either $\cd{2}$ or $\cd{3}$, and as before, it follows that $A=D$ or $B=D$ in case of entering $\cd{2}$ and $A=C$ or $B=C$ otherwise.

If $g'$ enters $\cd{1}$ and $\cd{3}$, then $A=C=B$. Since $\cd{1}$ and $\cd{3}$ are on the same side of the closed curve $g \in \cC(A,A)$, the curve $(g')\st \in \cC\st(A,A)$ crosses $g\st$ at an even number of points, we arrived at a contradiction. The case when $g'$ enters $\cd{2}$ and $\cd{4}$ is identical if we swap the role of $C$ and $D$.

If $g'$ enters $\cd{1}$ and $\cd{2}$, then the endpoints of $g'\in\cC(A,B)$ are $C$ and $D$. If $A=D$ and $B=C$, then the closed curves $B \tot{h_1} P_1 \tot{g} B$ and $A \tot{g} P_1 \tot{h_1} A$ must cross each other at an even number of points, so there is an intersection point distinct from $P_1$. Note that $h_1$ and $g$ are members of an intersecting family of pseudo-segments (since $\cH$ is a quasi-grid with respect to $g$), so the intersection must be at their endpoints, thus $A=B$. Consequently, if $g'$ enters $\cd{1}$ and $\cd{2}$, then either $A=B=C=D$ or $A=C$ and $B=D$ are two distinct points (see the bottom of Figure \ref{fig:touch2lem}).
Let $R_1$ be the region to the left of $A\tot{h_2} P_2 \backt{g} A$ (sparsely dotted) and $R_2$ be the left side of $B\backt{g}P_2\tot{h_2}B$ (densely dotted). Notice that $g'$ starts in $R_1$ and ends in $R_2$, two regions that are guaranteed to be disjoint apart from $P_2, A$ and $B$. Since it cannot cross $h_2$,  it crosses both $g_1\st$ and $g_2\st$, where $g_1=A \tot{g} P_2$ and $g_2=P_2 \tot{g} B$. This is a contradiction since $g'$ has to cross $g$ exactly once.

If $g'$ enters $\cd{3}$ and $\cd{4}$, then $A=C$ and $B=D$ by the same argument as in the previous case. Let $A'$ and $B'$ be points on $g'$ close to its starting and endpoint $A$ and $B$, so that there are no touchings or crossings on $g'$ between $A$ and $A'$ and between $B'$ and $B$. Note that $g'$ cannot cross $h_2$ because they need to touch. Thus the boundary of $R_1$ can only be crossed on $g_1$, and both $A'$ and $B'$ lie outside $R_1$ (they are in $\cd{3}$ and $\cd{4}$ respectively), so the number of intersections between $g_1\st$ and $(g')\st$ is even. The same argument holds for $R_2$ and $g_2$, so the number of intersections between $g\st$ and $(g')\st$ is even -- a contradiction.
\end{proof}

The next lemma demonstrates our intuitive claim that touching the members of a large quasi-grid by two curves is not possible inside an intersecting family of pseudo-segments.

\begin{lemma}\label{lem:quasigrid_has_unique_g}
Let $\cH$ be a set of at least 5 curves from $\cC(A,B)$, where $A$ and $B$ may coincide. Let $g_1,g_2$ be two curves such that $\cH \cup \{g_1,g_2\}$ form a family of pseudo-segments. Then $\cH$ cannot form a quasi-grid with respect to both $g_1$ and $g_2$. 
\end{lemma}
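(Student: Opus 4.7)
The plan is to exploit the rigidity of the quasi-grid structure, which will force $g_1$ and $g_2$ to be tangent to a common Jordan arc at $k \geq 5$ points each; the pseudo-segment bound $|g_1 \cap g_2| \leq 1$ will then deliver a contradiction.

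First I would match the two indexings. Index the $c_i$'s in the order of their touchings along $g_1$, and let $\sigma\in S_k$ be the permutation for which $c_{\sigma(1)},\ldots,c_{\sigma(k)}$ is the touching order along $g_2$. The order of the crossings $\{P_{\ell,j}\}_{\ell\ne j}$ along each directed $c_j$ from $A$ to $B$ is intrinsic to $c_j$; the $g_1$-quasi-grid reads it as $c_1,\ldots,c_{j-1},c_{j+1},\ldots,c_k$ and the $g_2$-quasi-grid reads it as $c_{\sigma(1)},\ldots,c_{\sigma(i-1)},c_{\sigma(i+1)},\ldots,c_{\sigma(k)}$ where $\sigma(i)=j$. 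Requiring agreement for every $j$ forces $\sigma=\mathrm{id}$: a short case check shows that the only permutation of $[k]$ whose restriction to every $(k-1)$-subset of positions equals that of the identity is the identity itself, provided $k\geq 3$. In particular, $P_i^{(2)}$ lies on the same middle arc $m_i$ of $c_i$ (between $P_{i-1,i}$ and $P_{i,i+1}$, with $A$ or $B$ playing the role of the boundary crossing for $i=1,k$) as $P_i^{(1)}$, and the concatenation $\mu:=m_1\cdot m_2\cdots m_k$ is a single Jordan arc from $A$ to $B$ to which both $g_1$ and $g_2$ are tangent at $k$ points and which neither crosses.

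Because all the $c_i$'s are pairwise $g_s$-touch-equivalent for each $s\in\{1,2\}$, every $g_s$ approaches every $m_i$ from the same local side of $\mu$. I would then split on whether $g_1,g_2$ sit on the same side of $\mu$ (Case A) or on opposite sides (Case B). In Case A, for each $i<k$ the arc of $g_1$ between $P_i^{(1)}$ and $P_{i+1}^{(1)}$ together with the sub-arc of $\mu$ between $P_i^{(1)}$ and $P_{i+1}^{(1)}$ bounds a Jordan region $R_i$ on the $g_1$-side of $\mu$. Near each $P_j^{(2)}$ the curve $g_2$ sits in the unique $R_i$ whose $\mu$-boundary contains $m_j$. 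Since the interiors of the $R_i$'s are pairwise disjoint and $g_2$ cannot cross $\mu$, every transit of $g_2$ between two different regions (or between some $R_i$ and the common exterior) forces at least one crossing of the $g_1$-arc bounding that $R_i$. Enumerating the possible interleavings of the pairs $(P_i^{(1)},P_i^{(2)})$ on each $m_i$ shows that for $k\geq 5$ at least two such crossings are unavoidable, contradicting the pseudo-segment property.

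In Case B, $g_2$ sits on the side of $\mu$ opposite to $g_1$, and since $g_2$ crosses no $c_i$ it is confined to a single face $F$ of the arrangement of $\cH$. For $g_2$ to touch each $c_i$ at $m_i$, every $m_i$ must lie on $\partial F$. However, reading off the quasi-grid combinatorics directly: for $1<i<k$ the face on the opposite side of $\mu$ adjacent to $m_i$ is bounded
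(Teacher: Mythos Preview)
Your proposal is incomplete: Case~B is cut off mid-sentence, and Case~A ends with an unproven assertion (``enumerating the possible interleavings \ldots shows that for $k\ge 5$ at least two such crossings are unavoidable''). These are not details to be filled in later; they are the entire content of the contradiction. In Case~A you also assert without argument that the regions $R_i$ have pairwise disjoint interiors, and you never address where the endpoints of $g_2$ lie --- the lemma does not say $g_2\in\cC(A,B)$, so $g_2$ may begin or end inside some $R_i$, in the common exterior, or even on the opposite side of $\mu$, which affects the crossing count. Finally, the notion ``same local side of $\mu$'' is delicate because $\mu$ has corners at each $P_{i,i+1}$, where $c_i$ and $c_{i+1}$ cross; you would need to check that the side identification is globally consistent across those corners.

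Your preliminary work is correct and pleasant: the permutation argument forcing $\sigma=\mathrm{id}$ is valid (for $k\ge 3$, deleting any one entry from $(\sigma(1),\dots,\sigma(k))$ must yield the increasing sequence, which pins $\sigma$), and the conclusion that both $P_i^{(1)}$ and $P_i^{(2)}$ lie on the middle arc $m_i$ follows cleanly from the quasi-grid axioms. But none of this machinery is needed. The paper's proof is a direct two-step face argument on the arrangement of $\{g_1,h_1,\dots,h_5\}$: first, $g_2$ cannot enter any bounded face bordered only by $h_i$'s, since each such face meets at most four of the five curves and $g_2$ cannot cross any $h_i$ to escape; second, this forces the touchings with $h_2,h_3,h_4$ to occur in the ``top'' triangular faces adjacent to $g_1$, and visiting more than one of those faces requires crossing $g_1$ more than once. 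No permutation matching, no auxiliary arc $\mu$, no side analysis. Your route might be salvageable, but it is substantially longer and currently has real gaps; the paper's argument is the one to learn here.
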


\begin{proof}
Suppose for contradiction that $\cH$ is a quasi-grid with respect to $g_1$ and $g_2$ simultaneously. Let $\cH'=\{ h_1,h_2, \dots, h_5 \}$ be a 5-element subset of $\cH$ that touch $g_1$ in this order at $P_1,\dots ,P_5$, see Figure~\ref{fig:quasigrid_of_5_curves}.

\begin{figure}
\begin{center}
\begin{tikzpicture}[x=0.7cm,y=0.5cm]
\begin{scriptsize}
\node (A) [nb] at (-1,0) {};
\node (B) [nb] at (8,0) {};
\coordinate (P1F) at (0.5,0.3) {};
\coordinate (P5F) at (6.5,0.3) {};
\node (P1) [nb, label={above:$P_1$}] at (0.5,0) {};
\node (P2) [nb, label={above:$P_2$}] at (2,0) {};
\node (P3) [nb, label={above:$P_3$}] at (3.5,0) {};
\node (P4) [nb, label={above:$P_4$}] at (5,0) {};
\node (P5) [nb, label={above:$P_5$}] at (6.5,0) {};
\node (C) [nb, label={below left:$A$}] at (0.5,-5) {};
\node (D) [nb, label={below right:$B$}] at (6.5,-5) {};
\draw[thick] (A)--node[above]{$g$} (P1) -- (B);
\draw (C.center) .. controls ++(0,0) and ++(-1,0) ..node[left]{$h_1$} (P1.center)
          .. controls ++(1,0) and ++(0,0) ..  (D.center);
\draw (C.center) .. controls ++(0,0) and ++(-1.1,0) ..node[above left =0cm and-0.1cm]{$h_2$} (P2.center)
          .. controls ++(1.1,0) and ++(0,0) ..  (D.center);
\draw (C.center) .. controls ++(0,0) and ++(-1,0) ..node[left]{$h_3$} (P3.center)
          .. controls ++(1,0) and ++(0,0) ..  (D.center);
\draw (C.center) .. controls ++(0,0) and ++(-1,0) .. (P4.center)
          .. controls ++(1,0) and ++(0,0) ..node[above right =0cm and-0.1cm]{$h_4$}  (D.center);
\draw (C.center) .. controls ++(0,0) and ++(-1,0) .. (P5.center)
          .. controls ++(1,0) and ++(0,0) ..node[right]{$h_5$}  (D.center);
\begin{scope}[on background layer]
  \clip (C.center) .. controls ++(0,0) and ++(-1,0) .. (P1.center)
          .. controls ++(1,0) and ++(0,0) .. (D.center) -- (P5F) -- (P1F) -- (C.center);
  \clip (C.center) .. controls ++(0,0) and ++(-1.1,0) .. (P2.center)
          .. controls ++(1.1,0) and ++(0,0) ..  (D.center)-- (P5F) -- (P1F) -- (C.center);
  \clip (C.center) .. controls ++(0,0) and ++(-1,0) .. (P3.center)
          .. controls ++(1,0) and ++(0,0) ..  (D.center)-- (P5F) -- (P1F) -- (C.center);
  \clip (C.center) .. controls ++(0,0) and ++(-1,0) .. (P4.center)
          .. controls ++(1,0) and ++(0,0) ..  (D.center)-- (P5F) -- (P1F) -- (C.center);
  \clip (C.center) .. controls ++(0,0) and ++(-1,0) .. (P5.center)
          .. controls ++(1,0) and ++(0,0) ..  (D.center)-- (P5F) -- (P1F) -- (C.center);
  \fill [gray!50,draw=none] (3,-5) -- (P5.center) -- (P1.center) -- (3,-5);
\end{scope}
\end{scriptsize}
\end{tikzpicture}
\end{center}
\caption{A 5-element quasi-grid $\cH'$}\label{fig:quasigrid_of_5_curves}
\end{figure}

The curve $g_2$ cannot have any points in a region which is enclosed by only curves from $\cH'$: it cannot leave the region since it cannot cross any of $\cH'$, and every region is bounded by at most four of the $\cH'$ curves, so at least one curve would remain untouchable for $g_2$.

Consequently, $g_2$ has to touch $h_2$, $h_3$ and $h_4$ in the regions enclosed by $g_1, h_i$ and $h_{i+1}\;(i=1,2,3,4)$ (see the shaded regions in Figure~\ref{fig:quasigrid_of_5_curves}). Since $g_2$ can meet $g_1$ at most once, it can visit only one of these regions, so at least one of $h_2,h_3$ and $h_4$ will remain untouchable -- we arrived at a contradiction. 
\end{proof}

\section*{Acknowledgements}
We thank J\'anos Pach and G\'eza T\'oth for suggesting the original problem, for the encouragement and for the fruitful discussions. We thank an anonymus referee for several remarks that improved the presentation of the paper.

\bibliographystyle{abbrv}
\bibliography{curves}

\end{document}